\newcommand{\beq}{\begin{equation}}  
\newcommand{\eeq}{\end{equation}}  
\newcommand{\bea}{\begin{eqnarray}}  
\newcommand{\eea}{\end{eqnarray}}
\newcommand\R{{\mathbb{R}}} 
\newcommand\C{{{C^1_0}}}
\newcommand{\red}[1]{{\color{black}{#1}}}
\newcommand\LT{L^2(\mathbb{R})}
\newcommand\LTP{L^2(\mathbb{R}^+)}
\newcommand\LTM{L^2(\mathbb{R}^-)}
\newcommand\HUZ{\widetilde{H}^1}
\newcommand\HT{H^1(\mathbb{R})}
\newcommand{\realpart}[1]{\operatorname{\sf Re\!}\left(#1\right)}
\newcommand{\realpartT}[1]{\operatorname{\sf Re}\left(#1\right)}
\newcommand{\eq}[2]{\begin{equation}\begin{split}#1\end{split}\label{#2}\end{equation}}
\newcommand{\inner}[2]{\left<#1,\,#2\right>}
\newcommand{\eqnn}[1]{\begin{equation}\begin{split}#1\end{split}\nonumber\end{equation}}
\newtheorem{theorem}{Theorem}
\numberwithin{theorem}{section}
\newtheorem{lemma}[theorem]{Lemma}
\newtheorem{proposition}[theorem]{Proposition}
\theoremstyle{definition}
\newtheorem{definition}[theorem]{Definition}
\newtheorem{remark}[theorem]{Remark}
\begin{document}
	
\title[Spectral and linear stability of peakons]{Spectral and linear stability of peakons in the Novikov equation}

\author[S. Lafortune]{St\'ephane Lafortune}
\address[S. Lafortune]{Department of Mathematics, College of Charleston, Charleston, SC 29401, USA}
\email{lafortunes@cofc.edu}


\date{}
\maketitle

\begin{abstract} 
The Novikov  equation is a peakon equation with cubic nonlinearity which, like the Camassa-Holm and the Degasperis-Procesi, is completely integrable.  
In this article, we study the spectral and linear stability of peakon solutions of the Novikov equation.  
	We prove spectral instability of the peakons in $L^2(\mathbb{R})$. To do so, we start with a linearized operator defined on 
	$H^1(\mathbb{R})$ and extend it to a linearized operator defined on weaker functions in  $L^2(\mathbb{R})$. The spectrum of the linearized operator in $L^2(\mathbb{R})$ is proven to cover a closed vertical strip of the complex plane. Furthermore, we prove that the peakons are spectrally unstable on $W^{1,\infty}(\R)$ and linearly and spectrally stable on $\HT$. The result on  $W^{1,\infty}(\R)$ are in agreement with previous work \cite{Chen2019} about linear instability, while our results on $\HT$ are in agreement with the orbital stability obtained in \cite{Chen2021,Palacio2020,Palacios2021}. 
\end{abstract} 

\section{Introduction} 
\label{intro}

We consider the Novikov equation,
\beq \label{Novikov} 
u_t - u_{xxt} +4u^2u_x=3u ^2u_x u_{xx} + u u_{xxx},
\eeq 
which was derived by Novikov \cite{Novikov} as part of a classification of generalized Camassa–Holm-type equations possessing infinite hierarchies of higher symmetries. Equation \eqref{Novikov} has a Lax pair, is bi-Hamiltonian, admits multi-peaked soliton (peakons) solutions, and possesses infinitely many symmetries and conserved quantities \cite{Novikov,HoneNovikovH}. The Novikov equation also admits smooth multi-soliton solutions \cite{Matsuno}. Finally, the Novikov equation was derived in the modeling of the propagation of shallow water waves of ``moderately
large amplitude'' \cite{Chen2022}.

The one-peakon solutions take the form 
\beq \label{peakon}
 u(x,t) = \sqrt{c}\, e^{-|x-ct|} \qquad x \in \mathbb{R}\;\;c>0.
\eeq
This solution is related to the reformulation 
of the Novikov equation \eqref{Novikov} in the weaker form
\begin{equation} 
\label{Novikovi} \vspace{-.2cm}
u_t + u^2 u_x + \frac{1}{2} \phi \ast \left[ \left(\frac{3}{2} uu_x^2 + u^3\right)_x +\frac{1}{2} u_x^3 \right] = 0,
\end{equation}
where $\ast$ denotes convolution and $\phi(x) \equiv e^{-|x|}$ is a Green's 
function satisfying $(1-\partial_x^2) \phi = 2 \delta_0$,
with $\delta_0$ being the Dirac delta distribution centered at $x = 0$.

In this article, we are interested in the stability of the peakon solutions defined on the whole real line. In this situation, the problem of well-posedness has been addressed in \cite{Ni2011,Himonas2012}, while the conditions for global weak and strong solutions on the line has been studied in \cite{Wu2011,Wu2012}. In particular, it is known that solutions with initial conditions $u_0(x)\in H^s(\R), s>3/2$, such that $u_0-u_{0}''>0$, do exist globally.  Finally, the phenomenon of wave-breaking (blow-up) has been studied in \cite{Jiang2012,Wu2012,Chen2016}. 

The peakon solution \eqref{peakon} was shown to be orbitally stable in $\HT$ with the use of a Lyapunov functional in \cite{Chen2021} and asymptotically stable, also on $\HT$, in \cite{Palacio2020,Palacios2021}.  At the linear level, it was shown in \cite{Chen2019} that the $W^{1,\infty}(\R)$ norm of small perturbations increases exponentially in time as $e^t$. 

In \cite{Const5}, it is noted that in the case of the peakon solutions of the Camassa-Holm equation, ``the nonlinearity plays a dominant role rather than being a higher-order correction" and that ``the passage from the linear to the nonlinear theory is not an easy task, and may even be false". We can take from those quotes that the studies of linear and nonlinear stability are two independent problems. This statement is reinforced by the results of \cite{Natali} concerning the  Camassa-Holm peakons. Indeed, in \cite{Natali}, the time exponential growth of the $\HT$ norm of the solutions of the linearized equation is obtained despite the orbital stability of the peakons in $\HT$. In the more general case of the $b$-family, the linear instability of the peakons on $\LT$ has been obtained in \cite{Lafortune2022}. One can surmise that the same phenomenon applies to the Novikov equation and, as mentioned before,  the peakons were shown to be unstable on $W^{1,\infty}(\R)$ \cite{Chen2019}.


The goal of this article is to complete the linear stability study of the peakon solutions of the Novikov equation. We first show that the peakon solutions are spectrally and linearly unstable on $\LT$. We also show that the spectrum of the linear operator consider on  $W^{1,\infty}(\R)$ is in agreement with the linear instability result obtained in \cite{Chen2019}. Finally, we show that the peakons are both linearly and spectrally stable when considered on $\HT$. 

In Section \ref{1} below, we obtain preliminary results, while in sections \ref{2}, \ref{3}, and \ref{4} we obtain, respectively, the results listed in the previous paragraph about the spectral and linear stability in $\LT$, $W^{1,\infty}(\R)$ , and $\HT$. Finally, Section \ref{5} is devoted to conclusions and future projects.

\section{Preliminary results}
\label{1}

In the traveling wave variables $(\xi,t)$, $\xi=x-ct$, we rewrite \eqref{Novikovi} as
\begin{equation} 
\label{Novikovi2} \vspace{-.2cm}
u_t -cu_\xi+u^2 u_\xi + \frac{1}{2} \phi \ast \left[ \left(\frac{3}{2} uu_\xi^2 + u^3\right)_\xi +\frac{1}{2} u_\xi^3 \right] = 0.
\end{equation}
Under the scaling
\eqnn{
u\rightarrow \sqrt{c}u,\;\;t\rightarrow ct,}
the positive parameter $c$ is set to $c=1$.
Eq.~\eqref{Novikovi2} now reads 
\eqnn{
u_t -u_\xi+u^2 u_\xi + \frac{1}{2} \phi \ast \left[ \left(\frac{3}{2} uu_\xi^2 + u^3\right)_\xi +\frac{1}{2} u_\xi^3 \right] = 0,\;\;\xi=x-t,
}
with the one-peakon solution now given by
 \eqnn{
u=\phi\equiv e^{-|\xi|}.
}
In \cite{Chen2019}, it is shown that if $u(x,t)$ is decomposed as the sum of a modulated peakon and its perturbation $v\in \HT$ in the form:
$$
u(x,t)=\phi(\xi-a(t))+v(\xi-a(t),t),
$$
with $a(t)$ satisfying
$$
a'(t)=2v(0,t)+{\mathcal{O}}(v(0,t)^2),
$$
then the equation for $v$ obtained by only keeping the linear terms reads 
\beq 
\label{eigp2}
v_t = (1-\phi^2) v_{\xi} + (\phi v-v_0) \phi'=Lv-v_0\phi',
\eeq
where $v_0\equiv v(0,t)$ and where we defined the linearized operator $L$  is given by 
\beq 
\label{Lop2}
L \equiv (1-\phi^2) \partial_{\xi} + \phi \phi'.
\eeq
In order to extend the linearized equation (\ref{eigp2}) to ${\rm Dom}(L)\subset \LT$, we are going to use an equivalent reformulation of that 
equation (\ref{eigp2}) for $v(\cdot,t) \in H^1(\mathbb{R})$. This is described in Lemma \ref{lem-reformulation2} after the proof of the following elementary properties of $L$.

\begin{lemma}
	\label{lem-projections2}
For $L : {\rm Dom}(L) \subset L^2(\mathbb{R}) \mapsto L^2(\mathbb{R})$, 
\beq
\label{ppp2}
		L\phi=\phi'{\mbox{ and }}L\phi'=\phi.
\eeq	
\end{lemma}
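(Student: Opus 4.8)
The plan is to verify the two identities directly by differentiation, using the explicit form $\phi(\xi)=e^{-|\xi|}$ and $L=(1-\phi^2)\partial_\xi+\phi\phi'$. First I would record the elementary facts that for $\xi\neq 0$ we have $\phi'(\xi)=-\operatorname{sgn}(\xi)\,\phi(\xi)$ and $\phi''(\xi)=\phi(\xi)$, so that $(\phi')^2=\phi^2$ on $\mathbb{R}\setminus\{0\}$. With these in hand, the first identity is immediate away from the origin: $L\phi=(1-\phi^2)\phi'+\phi\phi'\cdot\phi=(1-\phi^2)\phi'+\phi^2\phi'=\phi'$. For the second, $L\phi'=(1-\phi^2)\phi''+\phi\phi'\cdot\phi'=(1-\phi^2)\phi+\phi(\phi')^2=(1-\phi^2)\phi+\phi\cdot\phi^2=\phi$, again using $(\phi')^2=\phi^2$.

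The one point requiring a little care is the behaviour at $\xi=0$, where $\phi$ has a corner and $\phi'$ is discontinuous. Here I would note that the coefficient $1-\phi^2$ vanishes at $\xi=0$ (since $\phi(0)=1$), which is precisely what tames the jump of $\phi'$ there: the distributional derivative $\phi''=\phi-2\delta_0$ (consistent with $(1-\partial_\xi^2)\phi=2\delta_0$ quoted in the introduction) contributes a term $(1-\phi^2)(-2\delta_0)$ to $L\phi'$, and this vanishes because $(1-\phi^2)$ is continuous and equals $0$ at the support of $\delta_0$. Likewise $\phi\phi'$ has at worst a jump discontinuity at $0$ but no singular part, so no delta arises from the zeroth-order term. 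Thus both identities hold as equalities in $L^2(\mathbb{R})$ (indeed pointwise a.e., with the distributional check confirming no concentrated mass is lost at the origin). I should also remark that $\phi,\phi'\in H^1(\mathbb{R})\subset \mathrm{Dom}(L)$, so the statement is meaningful.

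The main (and really only) obstacle is bookkeeping at the singularity: one must be sure that applying $L$ to the non-smooth functions $\phi$ and $\phi'$ does not generate a spurious Dirac contribution, and the clean way to see this is the observation that the vanishing of $1-\phi^2$ at $\xi=0$ exactly cancels any potential delta from differentiating across the corner. Everything else is the two-line computation above. I would therefore present the proof as: (i) compute $\phi'$, $\phi''$ for $\xi\neq0$ and note $(\phi')^2=\phi^2$; (ii) verify $L\phi=\phi'$ and $L\phi'=\phi$ pointwise for $\xi\neq0$; (iii) check that no singular part is introduced at $\xi=0$ because $(1-\phi^2)|_{\xi=0}=0$, concluding the identities in $L^2(\mathbb{R})$.
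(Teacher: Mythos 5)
Your proposal is correct and follows essentially the same route as the paper: apply $L$ directly to $\phi$ and $\phi'$, use $(\phi')^2=\phi^2$ and the distributional identity $\phi''=\phi-2\delta_0$, and observe that the vanishing of $1-\phi^2$ at $\xi=0$ annihilates the delta contribution, i.e.\ $(1-\phi^2)\delta_0=0$. One small inaccuracy in a side remark: $\phi'\notin H^1(\mathbb{R})$ (its distributional derivative contains $-2\delta_0$); what matters, and what your computation actually shows, is that $\phi'\in{\rm Dom}(L)$ as defined in \eqref{Lop-domain2} since $(1-\phi^2)\phi''\in L^2(\mathbb{R})$.
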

\begin{proof}
Proving \eqref{ppp2} is done by applying $L$ given in \eqref{Lop2}, on $\phi$ and $\phi'$ and using the facts that $\phi'^2=\phi^2$, $\phi''=\phi-2\delta_0$, and 
$(1-\phi^2)\delta_0=0$, with $\delta_0$ being the Dirac delta distribution centered at $\xi = 0$.
\end{proof}

 For the next result, we consider the domain of $L$ on $\HT$ given by
 \beq 
\label{Lop-domain0}
{\mbox{Dom}}(L) = \left\{ v\in H^1(\R): \quad (1-\phi^2) v' \in H^1(\R) \right\}.
\eeq
We have the  following lemma.
\begin{lemma}
	\label{lem-reformulation2}
If $v\in {\mbox{Dom}}(L)\subset \HT$, with ${\mbox{Dom}}(L)$ given in \eqref{Lop-domain0}, is a solution to (\ref{eigp2}), then $v_{0}'(t)=0$, where $v_0\equiv v(0,t)$. Also, $v\in {\mbox{Dom}}(L)\subset \HT$ is a solution 
to the linearized equation (\ref{eigp2}) if and only if $\tilde{v} := 
v - v_0 \phi$ satisfying $\tilde{v}(0,t) = 0$ is a solution of the linearized equation 
\beq 
\label{eigp32}
\tilde{v}_t = L \tilde{v},
\eeq
where $L$ is given in \eqref{Lop2}.
\end{lemma}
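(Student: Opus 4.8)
The plan is to exploit the two identities from Lemma \ref{lem-projections2}, namely $L\phi=\phi'$ and $L\phi'=\phi$, together with the fact that $\phi(0)=1$ and $\phi'(0^\pm)=\mp1$, to pass between the two formulations. First I would establish the claim $v_0'(t)=0$. Starting from a solution $v\in{\rm Dom}(L)$ of \eqref{eigp2}, write $v_t = Lv - v_0\phi'$ and evaluate at $\xi=0$. The key observation is that $(Lv)(0)=\big[(1-\phi^2)v'\big](0)+\big[\phi\phi'\big](0)\,v(0)$; since $\phi(0)=1$ we have $1-\phi(0)^2=0$, and since $(1-\phi^2)v'\in H^1(\R)\subset C(\R)$ by the domain condition \eqref{Lop-domain0}, this continuous function has a well-defined value at $0$; moreover $1-\phi^2$ vanishes at $0$, so one expects $\big[(1-\phi^2)v'\big](0)=0$. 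Similarly $\phi(0)\phi'(0)$ is ambiguous because $\phi'$ jumps at $0$, but $\phi\phi'=\tfrac12(\phi^2)'$ and $\phi^2=e^{-2|\xi|}$ has $(\phi^2)'=-2\,{\rm sgn}(\xi)e^{-2|\xi|}$, whose symmetric value at $0$ is $0$; in any case the term is handled by the same cancellation. Finally $v_0\phi'(0)$ must be interpreted consistently — the point is that the right-hand side of \eqref{eigp2}, evaluated at $\xi=0$, forces $v_t(0,t)=0$, i.e. $v_0'(t)=0$.

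Next I would prove the equivalence. Given $v$ solving \eqref{eigp2}, set $\tilde v = v - v_0\phi$. Since $\phi(0)=1$, indeed $\tilde v(0,t)=v_0 - v_0\cdot 1 = 0$. Differentiating in $t$ and using $v_0'=0$ (just proved), $\tilde v_t = v_t = Lv - v_0\phi'$. Now $Lv = L\tilde v + v_0\,L\phi = L\tilde v + v_0\phi'$ by linearity and Lemma \ref{lem-projections2}, so $\tilde v_t = L\tilde v + v_0\phi' - v_0\phi' = L\tilde v$, which is \eqref{eigp32}. Conversely, suppose $\tilde v\in{\rm Dom}(L)$ solves $\tilde v_t = L\tilde v$ with $\tilde v(0,t)=0$, and set $v = \tilde v + v_0\phi$ for a function $v_0(t)$ to be identified; evaluating at $\xi=0$ gives $v(0,t)=v_0(t)$, consistently. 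Then $v_t = \tilde v_t + v_0'\phi = L\tilde v + v_0'\phi$, while $Lv - v_0\phi' = L\tilde v + v_0 L\phi - v_0\phi' = L\tilde v + v_0\phi' - v_0\phi' = L\tilde v$; hence \eqref{eigp2} holds if and only if $v_0'\phi = 0$, i.e. $v_0'(t)=0$, which is consistent with the first part. (One should remark that \eqref{eigp32} itself propagates $\tilde v(0,t)=0$: evaluating $\tilde v_t = L\tilde v$ at $\xi=0$ and using the same cancellation $1-\phi(0)^2=0$ together with $[\phi\phi'](0)\tilde v(0)$ handled as above gives $\tilde v_t(0,t)=0$, so the condition $\tilde v(0,t)=0$ need only be imposed at the initial time.)

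I would also verify the compatibility of domains: $\tilde v = v - v_0\phi$ lies in $H^1(\R)$ whenever $v$ does since $\phi\in H^1(\R)$, and $(1-\phi^2)\tilde v' = (1-\phi^2)v' - v_0(1-\phi^2)\phi'$; because $(1-\phi^2)\phi' = \phi' - \phi^2\phi'$ and both $\phi',\phi^2\phi'\in H^1(\R)$ (indeed $\phi^2\phi' = \tfrac13(\phi^3)'$ with $\phi^3=e^{-3|\xi|}$), the membership $(1-\phi^2)v'\in H^1(\R)$ transfers to $(1-\phi^2)\tilde v'\in H^1(\R)$, so $\tilde v\in{\rm Dom}(L)$, and the reverse direction is identical.

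The main obstacle is making rigorous the evaluation-at-$\xi=0$ argument in the first step: $\phi'$ is not continuous at the origin, so the products $\phi\phi'$ and $\phi'\,v$ must be interpreted carefully (as $L^2$ or distributional objects, or via one-sided limits), and one must check that the apparent cancellation $(1-\phi^2(0))(\cdot)=0$ is legitimate given only $(1-\phi^2)v'\in H^1(\R)$ rather than $v'$ itself being continuous. The cleanest route is to note that $w:=(1-\phi^2)v'\in H^1(\R)\hookrightarrow C_0(\R)$ so $w(0)$ makes sense, and then to argue that $w(0)=0$: near $\xi=0$, $1-\phi^2(\xi)\sim 2|\xi|\to0$, and if $w(0)\neq0$ then $v' \sim w(0)/(2|\xi|)$ would fail to be locally integrable, contradicting $v\in H^1(\R)$; hence $w(0)=0$ and the term $(Lv)(0)$ reduces to $[\phi\phi'](0)\,v_0$, which one pairs against $v_0\phi'(0)$ using the matching one-sided values to conclude $v_t(0,t)=0$.
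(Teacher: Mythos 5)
Your argument is correct and follows essentially the same route as the paper: $v_0'(t)=0$ is obtained by letting $\xi\to 0$ in \eqref{eigp2}, with $\bigl[(1-\phi^2)v_\xi\bigr](0)=0$ forced by exactly the square-integrability (Hardy-type) argument the paper uses, and the remaining terms cancelling because $\phi v-v_0\to 0$ while $\phi'$ stays bounded (your one-sided pairing is precisely this grouping); the equivalence then follows, as in the paper, from the substitution $v=\tilde v+v_0\phi$ together with $L\phi=\phi'$ from Lemma \ref{lem-projections2}. One correction to your auxiliary domain check: $\phi'$ and $\phi^2\phi'$ are \emph{not} individually in $H^1(\R)$, since $\phi''=\phi-2\delta_0$ and $(\phi^3)''=9\phi^3-6\delta_0$ contain Dirac masses at the origin; however, the combination you actually need, $(1-\phi^2)\phi'$, does lie in $H^1(\R)$ because the deltas cancel, $\bigl((1-\phi^2)\phi'\bigr)'=\phi-3\phi^3\in L^2(\R)$, so your conclusion that $v\in{\rm Dom}(L)$ if and only if $\tilde v\in{\rm Dom}(L)$ still stands.
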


\begin{proof}
We prove the first statement of the lemma by taking the limit $\xi \to 0$ 
into (\ref{eigp2}) in the class of functions 
$v \in \HT$, which yields 
\eq{
v_0'(t) = \lim_{\xi \to 0} \left((1-\phi^2) v_{\xi} + (\phi v-v_0) \phi' \right)= 0.
}{vzpz}
Note that we obtain the limit $\lim_{\xi \to 0} \left((1-\phi^2) v_{\xi}\right)=0$ from the {\red{following argument. Since $(1-\phi^2)v_\xi \in H^1(\mathbb{R})$, it is continuous
and it has a limit at zero. The limit must be zero, since otherwise, say it is
$k\neq 0, v_\xi(\xi) \sim k/(2|\xi|)$ as $\xi \sim 0$, which is not square integrable as required by
$v \in H^1(\mathbb{R})$.}}


To prove the second statement, we substitute $v(\xi,t) = \tilde{v}(\xi,t) + v_0(t) \phi(\xi)$ into (\ref{eigp2}) and we obtain
$$
\tilde{v}_t + v_0'(t) \phi = L \tilde{v} + v_0 L \phi-v_0 \phi'.
$$
The last two terms on the RHS cancel out due to the first identity (\ref{ppp2}) in Lemma \ref{lem-projections2}, while the second term on the LHS equals zero because of \eqref{vzpz}. 
The proof in the opposite direction from (\ref{eigp32}) to (\ref{eigp2}) is identical.
\end{proof}

Consider the eigenvalue problem associated to the operator $L$ defined in \eqref{Lop2}
\beq
\label{L0E2}
		(1-\phi^2)v_\xi+ \phi \phi' v = \lambda v, \quad \xi \in \mathbb{R}.
\eeq
The differential equation (\ref{L0E2}) is solved separately for $\xi > 0$ and $\xi < 0$ with the following general solution:
\beq
\label{Lv0S2}
		v(\xi) =
		\left\{ 
		\begin{array}{ll}
 v_+ e^{\lambda \xi} (1-e^{-2\xi})^{(1+\lambda )/2}, & \quad  \xi>0,\\
v_- e^{\lambda \xi} (1-e^{2\xi})^{(1-\lambda)/2}, & \quad \xi<0, \end{array}\right. 
\eeq
	where $v_+$ and $v_-$ are arbitrary constants.

{\red{With the transformations of Lemma \ref{lem-reformulation2}, we have reduced the linearized evolution 
(\ref{eigp2}) defined in $\HT$ to the linearized evolution 
(\ref{eigp32}) defined on the larger space $\LT$. We use this result in two ways in this article. First, in Section \ref{2}, where we see \eqref{eigp32} as a weak version of the original 
linear equation (\ref{eigp2}), just like, for example, \eqref{Novikovi} is a weak version of the Novikov equation \eqref{Novikov} that admits peakon solutions. Since  \eqref{eigp32} is well defined in $\LT$, we use that version in Section \ref{2} to define linear and spectral stability and instability on $\LT$.  Note that this idea was also used in \cite[Lemma 2.3 and Definition 2.4]{Lafortune2022} for the linear and spectral stability analysis of the $b$-family peakons on $\LT$. The second way we use Lemma \ref{lem-reformulation2} is in sections \ref{3} and \ref{4}. In those sections, the function space is $\HT$, and the result of Lemma \ref{lem-reformulation2} applies directly. We use \eqref{eigp32} as a simpler version of the original 
linear equation (\ref{eigp2}), simpler in the sense that it directly relates the spectral properties of $L$ to the solutions of the linear problem.

 As a result of using \eqref{eigp32}, the spectral 
properties of the operator $L$ 
determine the linear stability of the peakons. The spectrum of the operator $L$ 
is defined according to the following standard definition 
(see \cite[Definition 6.1.9 ]{Buhler18}).}}

\begin{definition}
\label{Bigdef}
	Let $A$ be a closed unbounded linear operator on a Banach space $X$ with ${\rm Dom}(A) \subset X$.	The complex plane $\mathbb{C}$ is decomposed into the following two sets:
	\begin{enumerate}
		\item The spectrum of $A$ is given by
		$$
		\sigma(A) =  \left\{\lambda \in \mathbb{C} : \;\;
		A-\lambda I : {\rm dom}(A)\rightarrow X \text{ is not a bijection} \right\}.
		$$
		\item The resolvent
		$$
		\rho(A) =\mathbb{C} \setminus \rho(A).
		$$
		The spectrum is decomposed into the following disjoint sets:
		\begin{enumerate}
			\item the point spectrum
			$$
			\sigma_{\rm p}(A) = \{ \lambda \in \mathbb{C} : \;\; {\rm Ker}(A-\lambda I) \neq \{0\}\},
			$$
			\item the residual spectrum
			$$
			\sigma_{\rm r}(A) = \{ \lambda \in \mathbb{C} : \;\; {\rm Ker}(A-\lambda I) = \{0\}, \;\; \overline{{\rm Ran}(A-\lambda I)} \neq X \},
			$$
			\item the continuous spectrum
			$$
			\sigma_{\rm c}(A) = \{ \lambda \in \mathbb{C} : \;\; {\rm Ker}(A-\lambda I) = \{0\}, \;\; \overline{{\rm Ran}(A-\lambda I)} = X, \;\;
			{{\rm Ran}(A-\lambda I)} \neq X \}.
			$$
		\end{enumerate}\medskip
	\end{enumerate}
	\end{definition}	
	
\section{Spectral and linear instability on $L^2(\R)$}
\label{2}

{\red{The original linearized equation (\ref{eigp2}) is well-defined on $\HT$, but not on $\LT$. However, Equation (\ref{eigp32}), derived from (\ref{eigp2}) as  part of Lemma \ref{lem-reformulation2}, is now well defined 
if $\tilde{v}(\cdot,t) \in {\rm Dom}(L) \subset L^2(\mathbb{R})$. We view  (\ref{eigp32}) as a weak version of Equation (\ref{eigp2})
that we use to define linear stability and instability on $\LT$ as follows.}} 
%

\begin{definition}
	\label{def-instability}
The peakon solution $u(x,t) = \phi(x - t)$ of the Novikov equation \eqref{Novikovi}  is said to be linearly stable on $\LT$ if for every
$\tilde{v}_0 \in {\rm Dom}(L) \subset L^2(\mathbb{R})$, there exists a positive constant $C$ and a unique solution $\tilde{v} \in C(\mathbb{R},{\rm Dom}(L))$ to the linearized equation (\ref{eigp32}) with $\tilde{v}(\xi,0)=\tilde{v}_0(\xi)$ such that
$$
\|\tilde{v}(\cdot,t)\|_{L^2}\leq C \|\tilde{v_0} \|_{L^2}, \quad t>0.
$$
Otherwise, it is said to be linearly unstable.
\end{definition}

We first prove that the peakons are spectrally unstable on $L^2(\R)$ meaning that the spectrum of the operator $L$ has a nontrivial intersection with the open right side of the complex plane. The domain of $L$ on $\LT$ being given by
\beq 
\label{Lop-domain2}
{\mbox{Dom}}(L) = \left\{ v\in L^2(\mathbb{R}) : \quad (1-\phi^2) v' \in L^2(\mathbb{R}) \right\},
\eeq
we prove the following theorem.

\begin{theorem}
	\label{S2}
	The spectrum of the linear operator $L$ defined by (\ref{Lop2}) with domain (\ref{Lop-domain2}) is given by  
	\begin{equation}
	\notag
	\sigma(L) = \left\{ \lambda \in\mathbb{C} : \;\; 0\leq |\realpart{\lambda}|\leq  2 \right\}.
	\end{equation}
Moreover, the point spectrum 
is located for 
\eq{ 0<|\realpart{\lambda}| <  2.}{PS2} 
The continuous spectrum is located for 
$\realpart{\lambda} = 0$ and $\realpart{\lambda} = \pm 2$ and there is no residual spectrum. 
\end{theorem}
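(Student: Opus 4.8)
The plan is to read off each piece of the spectrum from the explicit solution formula \eqref{Lv0S2}, from the resolvent equation $(L-\lambda)v=f$, and from the adjoint of $L$. The structural point that makes this tractable is that, since the leading coefficient $1-\phi^2$ vanishes linearly at $\xi=0$, the operator $(1-\phi^2)\partial_\xi$ annihilates a Dirac mass at the origin, so $L$ imposes no transmission condition there: the two half-lines $\R^{\pm}$ can be treated independently, with $\xi=0$ a singular endpoint on each.

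First I would pin down the point spectrum. A nonzero $v\in{\rm Dom}(L)$ with $Lv=\lambda v$ is, on each open half-line, a constant multiple of the corresponding branch of \eqref{Lv0S2}, since the homogeneous ODE is regular on $(0,\infty)$ and on $(-\infty,0)$. On $\xi>0$ that branch behaves like $e^{\lambda\xi}$ at $+\infty$, hence is square integrable there exactly when $\realpart{\lambda}<0$, and like $(2\xi)^{(1+\lambda)/2}$ near $\xi=0^{+}$, which --- for both $v$ and $(1-\phi^2)v'$, by a short computation --- is square integrable near $0$ exactly when $\realpart{\lambda}>-2$. So the $\xi>0$ branch supplies an eigenfunction precisely for $-2<\realpart{\lambda}<0$, and by the symmetry $\xi\mapsto-\xi$, $\lambda\mapsto-\lambda$ that interchanges the two branches, the $\xi<0$ branch precisely for $0<\realpart{\lambda}<2$; these ranges are disjoint, so each kernel is at most one-dimensional. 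This gives \eqref{PS2} and $\sigma_{\rm p}(L)=\{0<|\realpart{\lambda}|<2\}$.

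Next, the resolvent set and the edges of the strip. For $|\realpart{\lambda}|>2$ I would show $\lambda\in\rho(L)$ by solving $(L-\lambda)v=f$ on each half-line by variation of parameters built from \eqref{Lv0S2}, choosing the constant of integration so as to suppress the exponentially growing homogeneous solution at $\pm\infty$. The resulting $v$ lies in ${\rm Dom}(L)$ (since $Lv=\lambda v+f\in L^2(\R)$) and is the unique $L^2$ solution (since $\lambda\notin\sigma_{\rm p}(L)$), and the bound $\|v\|_{L^2}\le C\|f\|_{L^2}$ follows from Schur's test on the explicit Green kernel. Both Schur integrals are finite because of the exponential decay of the kernel at $\pm\infty$ and a Hardy-type cancellation near $\xi=0$: there the homogeneous solution vanishes like $(2\xi)^{(1+\realpart{\lambda})/2}$ while the reciprocal factor $((1-\phi^2)W)^{-1}$ blows up like $(2\xi)^{-(3+\realpart{\lambda})/2}$, and because $1-\phi^2$ vanishes to exactly first order these powers combine so that the integrals stay bounded uniformly down to the origin. (Equivalently, the substitution $\rd y=\rd\xi/(1-\phi^2)$ together with the unitary map $v\mapsto(1-\phi^2)^{1/2}v$ turns $L$, restricted to either half-line, into a first-order operator $\partial_y+p(y)$ on $L^2(\R)$ with $p$ smooth and bounded, tending to $0$ at one end and to $\pm2$ at the other; this sends $\xi=0$ to $y=-\infty$, removes the degeneracy, and makes the estimate a one-line Schur bound.) For $\realpart{\lambda}\in\{0,\pm2\}$ I would instead exhibit a singular sequence $v_n\in{\rm Dom}(L)$ with $\|v_n\|_{L^2}=1$ and $\|(L-\lambda)v_n\|_{L^2}\to0$, obtained by truncating the relevant branch of \eqref{Lv0S2} (which just misses being $L^2$ by a borderline logarithm) to a window escaping to $\xi=+\infty$ when $\realpart{\lambda}=0$, or collapsing onto $\xi=0$ when $\realpart{\lambda}=\pm2$. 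In the collapsing case $(L-\lambda)v_n$ equals $(1-\phi^2)$ times the cutoff derivative times the homogeneous solution, and the first-order vanishing of $1-\phi^2$ at the origin is exactly what makes this $o(1)$ relative to $\|v_n\|_{L^2}$. With the previous step this yields $\rho(L)=\{|\realpart{\lambda}|>2\}$, i.e. $\sigma(L)=\{0\le|\realpart{\lambda}|\le2\}$.

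Finally I would rule out residual spectrum. If $\overline{{\rm Ran}(L-\lambda)}\ne L^2(\R)$ there is a nonzero $w\in L^2(\R)$ orthogonal to ${\rm Ran}(L-\lambda)$; testing against $C_c^{\infty}(\R\setminus\{0\})$ shows $w$ solves $-(1-\phi^2)w'+3\phi\phi'w=\bar\lambda w$ distributionally, hence $w$ is a multiple of $e^{-\bar\lambda\xi}(1-e^{\mp2\xi})^{-(3\pm\bar\lambda)/2}$ on $\R^{\pm}$. But on $\R^{+}$ square integrability at $+\infty$ forces $\realpart{\lambda}>0$ while square integrability at $\xi=0$ forces $\realpart{\lambda}<-2$, and symmetrically on $\R^{-}$; these are incompatible, so no such $w$ exists for any $\lambda$. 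Hence ${\rm Ran}(L-\lambda)$ is always dense, $\sigma_{\rm r}(L)=\emptyset$, and therefore $\sigma_{\rm c}(L)=\sigma(L)\setminus\sigma_{\rm p}(L)=\{\realpart{\lambda}=0\}\cup\{\realpart{\lambda}=\pm2\}$. The step I expect to be the real obstacle is the resolvent estimate for $|\realpart{\lambda}|>2$: the vanishing of the leading coefficient at $\xi=0$ makes it a genuine singular endpoint, and securing the uniform bound there is precisely what the Hardy-type cancellation --- or the change of variables sending $\xi=0$ to infinity --- is for.
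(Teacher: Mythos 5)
Your proposal is correct, and its skeleton (explicit half-line solutions \eqref{Lv0S2} for the point spectrum, variation of parameters for the resolvent, the explicit adjoint solutions \eqref{Lv0A22} to kill the residual spectrum) coincides with the paper's; the differences are in two sub-steps, and they are genuine. For the resolvent bound at $|\realpart{\lambda}|>2$, the paper also builds the solution \eqref{Lv0SH0} and checks its decay, but then gets $\|v\|_{L^2}\le C_\lambda\|f\|_{L^2}$ by an energy identity: pairing \eqref{L0RE3} with $\bar v$, integrating by parts and using $|\langle \phi\phi' v,v\rangle|\le\|v\|^2$ to obtain $(\realpart{\lambda}-2)\|v\|^2\le\|f\|\,\|v\|$; this is shorter than your weighted Schur/Hardy estimate on the Green kernel and exhibits the sharp threshold $\realpart{\lambda}=2$ with no work at the degenerate endpoint, whereas your kernel (or change-of-variables) argument is more explicit about why $\xi=0$ is harmless and is in fact the technique the paper itself resorts to later, via Hardy's inequality, for the $H^1$ resolvent analysis in Lemma \ref{ResLemmaPr}. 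For the boundary lines $\realpart{\lambda}\in\{0,\pm2\}$, the paper constructs nothing: since the open strip $0<|\realpart{\lambda}|<2$ consists of eigenvalues and the spectrum is closed, the closed strip lies in $\sigma(L)$, and the lines are continuous spectrum by elimination (no point, no residual); your Weyl singular sequences (windows escaping to $+\infty$ for $\realpart{\lambda}=0$, logarithmic cutoffs collapsing onto $\xi=0$ for $\realpart{\lambda}=\pm2$) are extra work but more self-contained and show these $\lambda$ are approximate eigenvalues, which the closedness argument does not. Finally, your annihilator argument for $\sigma_{\rm r}(L)=\emptyset$ is just a hands-on proof of the inclusion $\sigma_{\rm r}(L)\subset\sigma_{\rm p}(L^*)$ that the paper imports from \cite{Buhler18}, with the same incompatible integrability conditions at $0$ and at infinity.
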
 

\begin{proof}
{\noindent}{\em Point spectrum of $L$.} We consider  the differential equation \eqref{L0E2} and notice that 
\eq{
{\text{if }} v=v_0(\xi) \text{ solves \eqref{L0E2} for } \lambda=\lambda_0, \text{ then } v=v_0(-\xi) \text{ is a solution for } \lambda=-\lambda_0.}{Sym}
As a consequence, the spectrum of $L$ is symmetric with respect to the real axis. We thus only need to consider values of $\lambda$ such that 
$\realpart{\lambda}\geq 0$.

The solution of the differential equation \eqref{L0E2} is given in \eqref{Lv0S2}. Since 
$\realpart{\lambda}\geq 0$, the conditions for $v$ to go to zero 
as $\xi\rightarrow \pm \infty$ is that $\realpart{\lambda}> 0$ and $v_+=0$. Furthermore, looking at the behavior of $v$ as $\xi\rightarrow 0^-$, the function $v$ is in $\LT$ only if
$
{(1-\realpart{\lambda})}/{2}>-{1}/{2},
$
that is
\eqnn{
0<\realpart{\lambda}<2.
}
Thus the point spectrum is given by \eqref{PS2}.

{\noindent}{\em Resolvent set of $L$.} 
Let us consider the resolvent equation
\beq
\label{L0RE3}
		Lv-\lambda v=f,
\eeq
where $f \in L^2(\mathbb{R})$ is arbitrary. 
The following lemma together with the fact that the 
point spectrum is located in the region defined by \eqref{PS2} imply that the resolvent 
set of $L$ is specified by the condition $|\realpart{\lambda}|>2$.
\begin{lemma}
The resolvent equation \eqref{L0RE3} has a solution in $\LT$ for all $f\in\LT$ if $|\realpart{\lambda}|>2$.  That is, the resolvent set of the operator \eqref{Lop2} on $\LT$ includes the region defined by the relation  $|\realpart{\lambda}|>2$.
\end{lemma}

\begin{proof} 
{\red{For $\realpart{\lambda}>2$, we solve the resolvent equation \eqref{L0RE3} using the expressions in 
\eqref{Lv0S2} and find the following solution converging to zero as $\xi\rightarrow \pm \infty$
\beq
\label{Lv0SH0}
		v(\xi) =
		\left\{ 
		\begin{array}{ll}
v^+_{0}(\xi)\int_\infty^\xi \frac{f(\xi')d\xi'}{(1-e^{-2\xi'})v^+_{0}(\xi')}, & \quad  \xi>0,\\
v^-_{0}(\xi)\int_{0}^\xi \frac{f(\xi')d\xi'}{(1-e^{2\xi'})v^-_{0}(\xi')}, & \quad \xi<0, \end{array}\right.
\eeq
where
$$
v^\pm_{0}\equiv e^{\lambda \xi} (1-e^{\mp 2\xi})^{(1\pm\lambda )/2}.
$$
The two integrals in \eqref{Lv0SH0} exist if $\realpart{\lambda}>2$ because 
\eq{
f\in\LT,\;\;\frac{1}{(1-e^{-2\xi'})v^+_{0}(\xi')}\in L^2(\xi,\infty)\text{ for all }\xi>0,\text{ and }\frac{1}{(1-e^{2\xi'})v^-_{0}(\xi')}\in L^2(\xi,0)\text{ for all }\xi<0.
}{Statements}
The last statement in \eqref{Statements} is seen to be true when $\realpart{\lambda}>2$ by considering the behavior of the expression as $\xi'\to 0^-$ given by  
$$
\frac{1}{(1-e^{2\xi'})v^-_{0}(\xi')}\sim \frac{1}{\xi'^{(3-\lambda)/2}}\text{ as }\xi'\to 0^-.
$$
To show that $v$ as defined in \eqref{Lv0SH0} converges to zero as $\xi\to \pm \infty$, we use l'Hospital's rule. In the case of the limit at $+\infty$, we compute
\eq{
\lim_{\xi\to\infty} v=\lim_{\xi\to\infty}\frac{\left(\int_\infty^\xi \frac{f(\xi')d\xi'}{(1-e^{-2\xi'})v^+_{0}(\xi')}\right)'}{\left(1/v^+_{0}(\xi)\right)'}=
\lim_{\xi\to\infty} \left(\frac{{-f(\xi)}}{\lambda+e^{- 2\xi}}\right)=0,
}{HopEx}
where we used the fact that $|v^+_0|\to\infty$ as $\xi\to\infty$. For the limit at $-\infty$, we obtain
$$
\lim_{\xi\to -\infty} v=\lim_{\xi\to -\infty}\frac{\left(\int_0^\xi \frac{f(\xi')d\xi'}{(1-e^{2\xi'})v^-_{0}(\xi')}\right)'}{\left(1/v^-_{0}(\xi)\right)'}=
\lim_{\xi\to -\infty}\left( \frac{{f(\xi)}}{e^{2\xi}-\lambda}\right)=0,
$$
where we used the fact that $|v^-_0|\to 0$ as $\xi\to\infty$.

Now that we have a solution to the resolvent equation that converges to zero as $\xi\to\pm\infty$ when $\realpart{\lambda}>2$, we show that the solution is in $\LT$.}} We multiply both sides of the resolvent equation \eqref{L0RE3} by $\bar{v}$ and integrate over $\mathbb{R}$. Using the definition of $L$ given in 
	\eqref{Lop2}, one finds
	\eq{
		\inner{\left((1-\phi^2)v\right)'}{v}+3\inner{\phi\phi' v}{v}-\lambda\|v\|^2=\inner{f}{v}.
	}{in222}
By integration by parts, since $\lim\limits_{\xi \to \pm \infty} v(\xi) = 0$ (Equation \eqref{Lv0SH} gives explicit expressions for the solutions to \eqref{L0RE3} that converge to zero), we 
	have that
	$$
	\inner{\left((1-\phi^2) v\right)'}{v}=-\inner{v}{\left((1-\phi^2) v\right)'}-2\inner{\phi\phi' v}{v},
	$$
	thus
\begin{equation}
\label{tech-eq22}
	\realpart{\inner{((1-\phi^2)v)'}{v}}=-\inner{\phi\phi' v}{\bar{v}}.
\end{equation}
	Taking the real part of \eqref{in222}, multiplying by -1, and using \eqref{tech-eq22}, we get 
\beq
\label{inr2}
		-2 \inner{\phi\phi' v}{v}+\realpart{\lambda}\|v\|^2=-\realpart{\inner{f}{v}},
\eeq
where
\beq
\label{inu2}
	-\|v\|^2 \leq \inner{\phi\phi' v}{v}\leq \|v\|^2.
\eeq
Using the upper bound of (\ref{inu2}) in \eqref{inr2}, we find that 
\beq
\notag
		\left(\realpart{\lambda}-2\right)\|v\|^2\leq\left|\realpart{\inner{f}{v}}\right| \leq \| f \| \| v \|,
\eeq
where the Cauchy--Schwarz inequality has been used. Hence, for every $\realpart{\lambda}> 2$, there exists $C_{\lambda}$ such that 
$\| v \| \leq C_{\lambda} \|f \|$ so that this $\lambda$ belongs to $\rho(L)$.

{\red{For the case where $\realpart{\lambda}<0$, the symmetry of the spectrum with respect to the imaginary axis mentioned at the beginning of the proof of Theorem \ref{S2} also applies to the 
resolvent set and thus $\realpart{\lambda} < -2$ belongs to $\rho(L)$.


To finish the proof that $\lambda$ is in the resolvent set if $|\realpart{\lambda}|>2$, we need to show that $v$ is in the domain of $L$ given in \eqref{Lop-domain2}, meaning that we need to show that
$(1-\phi^2)v'\in\LT$. This statement is a direct consequence of the resolvent equation \eqref{L0RE3}, combined with the fact that both $v$ and $f$ are themselves in $\LT$.}}

\end{proof}

{\noindent}{\em Residual spectrum of $L$.}  By Lemma 6.2.6 in \cite{Buhler18}, 
if $\sigma_p(L^*)$ is the empty set, then $\sigma_r(L) \subset \sigma_p(L^*)=\emptyset$, 
where 
\beq
\label{L0a2}
L^* = - \partial_{\xi} (1-\phi^2) + \phi \phi' = -(1-\phi^2) \partial_{\xi} + 3\phi \phi'
\eeq 
is the adjoint operator to $L$ in $L^2(\mathbb{R})$. The eigenvalue problem for $L^*$ 
\beq
\label{EPs2}
-(1-\phi^2) {v}_{\xi} + 3\phi \phi' v = \lambda v.
\eeq
becomes (\ref{L0E2}) after the transformation: $\lambda \mapsto 3\lambda$ and 
$v \mapsto v^{-3}$. Therefore, we obtain the following general solution
by applying this transformation to (\ref{Lv0S2}):
\beq
\label{Lv0A22}
v(\xi) =
\left\{ 
\begin{array}{ll}
v_+ e^{-\lambda \xi} (1-e^{-2\xi})^{-(3+\lambda )/2}, & \quad  \xi>0,\\
v_- e^{-\lambda \xi} (1-e^{2\xi})^{-(3-\lambda)/2}, & \quad \xi<0, \end{array}\right.
\eeq
where $v_+$ and $v_-$ are arbitrary constants. Proceeding similarly with the limits $\xi \to \pm \infty$ and $\xi \to 0^{\pm}$ shows 
that for nonzero solutions of (\ref{EPs2}) to exist in $L^2(\mathbb{R})$ in the case $\realpart{\lambda}>0$, one must 
require $v_- = 0$ and $-(3+\realpart{\lambda})/2>-1/2$. Since these conditions lead to an empty set, this yields $\sigma_r(L)\subset \sigma_p(L^*)=\emptyset$. \\

\end{proof}

Now turning to linear instability, we use the fact that Equation \eqref{eigp32} with initial condition in $\LT$ can be solved exactly by using the method of characteristics. The following proposition on the global existence of the solution on $\LT$ gives the bounds obtained from the exact solution. Notice that the bounds obtained on the $\LT$-norm are in agreement with the result of Theorem \ref{S2} concerning the spectrum of $L$ being located in the band defined by $0\leq |\realpart{\lambda}|\leq 2$. Indeed, we obtain that the $\LT$ norm of the solutions does not grow faster than $e^{2t}$, but there exists a solution for which the norm grows faster than $e^{\lambda_0 t}$ for every $\lambda_0\in (0,2)$.  

\begin{proposition}
	\label{prop-final}
	For every $\mathfrak{v}_0 \in {\rm Dom}(L) \subset \LT$ given in \eqref{Lop-domain2}, the initial-value problem (\ref{eigp32}) with the condition $\tilde v(\xi,0)=\mathfrak{v}_0(\xi)$  admits a unique global solution $v$ for  $t>0$
satisfying the properties that $\| \tilde v(\cdot,t) \|_{L^2(0,\infty)} \leq \| \mathfrak{v}_0 \|_{L^2(0,\infty)}$ and $\| \tilde v(\cdot,t) \|_{L^2(-\infty,0)} \leq e^{2t} \| \mathfrak{v}_0 \|_{L^2(-\infty,0)}$. Furthermore, there exists $\mathfrak{v}_0 \in {\rm Dom}(L)$ such that $\| \tilde v(\cdot,t) \|_{L^2(-\infty,0)} = e^{\lambda_0 t} \| \mathfrak{v}_0 \|_{L^2(-\infty,0)}$ for each $\lambda_0 \in (0,2)$. 
\end{proposition}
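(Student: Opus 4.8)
The plan is to solve \eqref{eigp32} explicitly by the method of characteristics, separately on $\mathbb{R}^+$ and $\mathbb{R}^-$. Writing the equation as $\tilde v_t - (1-\phi^2)\tilde v_\xi = \phi\phi'\,\tilde v$, the characteristic ODE is $\dot\xi = -(1-\phi^2(\xi))$, with $\dot{\tilde v} = \phi\phi'(\xi)\,\tilde v$ along it. Since $1-\phi^2$ vanishes to first order at $\xi=0$, that point is a fixed point of the flow and the open half-lines are each invariant; moreover $|\dot\xi|\le 1$, so no characteristic reaches $\pm\infty$ in finite time and the flow is global. On $\mathbb{R}^+$ one has $1-\phi^2=1-e^{-2\xi}$, $\phi\phi'=-e^{-2\xi}$, the characteristic through $(\xi_0,0)$ satisfies $(e^{2\xi}-1)^{-1}=(e^{2\xi_0}-1)^{-1}e^{2t}$, and integrating $\dot{\tilde v}=-e^{-2\xi}\tilde v$ along it yields
$$
\tilde v(\xi,t)=\mathfrak v_0\big(\Xi_+(\xi,t)\big)\,e^{\Xi_+(\xi,t)-\xi-t},\qquad e^{2\Xi_+(\xi,t)}=1+(e^{2\xi}-1)e^{2t},
$$
where $\Xi_+(\cdot,t)$ is a smooth diffeomorphism of $(0,\infty)$. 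On $\mathbb{R}^-$ one has $1-\phi^2=1-e^{2\xi}$, $\phi\phi'=e^{2\xi}$, the characteristic through $(\xi_0,0)$ satisfies $\tfrac{e^{2\xi}}{1-e^{2\xi}}=\tfrac{e^{2\xi_0}}{1-e^{2\xi_0}}e^{-2t}$, and integrating $\dot{\tilde v}=e^{2\xi}\tilde v$ gives
$$
\tilde v(\xi,t)=\mathfrak v_0\big(\Xi_-(\xi,t)\big)\sqrt{\frac{1-e^{2\xi}}{1-e^{2\Xi_-(\xi,t)}}},\qquad \frac{e^{2\Xi_-(\xi,t)}}{1-e^{2\Xi_-(\xi,t)}}=\frac{e^{2\xi}}{1-e^{2\xi}}\,e^{2t},
$$
with $\Xi_-(\cdot,t)$ a smooth diffeomorphism of $(-\infty,0)$. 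Uniqueness in the class of $C(\mathbb{R}^+,{\rm Dom}(L))$ solutions follows either because the characteristics foliate each half-plane (the transport coefficient is Lipschitz), or from the a priori bound below applied to a difference of solutions.

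Next I would deduce the two norm bounds from these formulas by the change of variables $\xi\mapsto\xi_0:=\Xi_\pm(\xi,t)$. On $(0,\infty)$, since $e^{2\xi}=1+(e^{2\xi_0}-1)e^{-2t}$ gives $d\xi=e^{2(\xi_0-\xi-t)}\,d\xi_0$, one obtains
$$
\|\tilde v(\cdot,t)\|_{L^2(0,\infty)}^2=\int_0^\infty \mathfrak v_0(\xi_0)^2\,e^{4(\xi_0-\xi-t)}\,d\xi_0,\qquad e^{2(\xi_0-\xi-t)}=\frac{e^{2\xi_0}e^{-2t}}{1+(e^{2\xi_0}-1)e^{-2t}}\le 1,
$$
the inequality being equivalent to $1-e^{-2t}\ge 0$; hence $\|\tilde v(\cdot,t)\|_{L^2(0,\infty)}\le\|\mathfrak v_0\|_{L^2(0,\infty)}$. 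On $(-\infty,0)$, setting $h(\eta):=(1-e^{2\eta})^{-1}$ the characteristic relation reads $h(\xi_0)-1=(h(\xi)-1)e^{2t}$, from which $\tfrac{1-e^{2\xi}}{1-e^{2\xi_0}}=\tfrac{h(\xi_0)}{h(\xi)}$ and $d\xi=\tfrac{h(\xi_0)}{h(\xi)}\,d\xi_0$, so that
$$
\|\tilde v(\cdot,t)\|_{L^2(-\infty,0)}^2=\int_{-\infty}^0 \mathfrak v_0(\xi_0)^2\Big(\tfrac{h(\xi_0)}{h(\xi)}\Big)^2 d\xi_0,\qquad \frac{h(\xi_0)}{h(\xi)}=e^{2t}-\frac{e^{2t}-1}{h(\xi)}\le e^{2t},
$$
hence $\|\tilde v(\cdot,t)\|_{L^2(-\infty,0)}\le e^{2t}\|\mathfrak v_0\|_{L^2(-\infty,0)}$. (Equivalently, both bounds drop out of the energy identity $\tfrac{d}{dt}\|\tilde v\|_{L^2(I)}^2=4\int_I\phi\phi'\,\tilde v^2$ on $I=(0,\infty)$ and $I=(-\infty,0)$, using $\phi\phi'=-e^{-2\xi}\le 0$ respectively $0<\phi\phi'=e^{2\xi}<1$, the boundary contribution at $\xi=0$ vanishing because $1-\phi^2\to 0$ there.)

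For the sharpness statement I would, for each $\lambda_0\in(0,2)$, take $\mathfrak v_0$ to be the eigenfunction of $L$ with eigenvalue $\lambda_0$ already produced in the proof of Theorem~\ref{S2}: $\mathfrak v_0\equiv 0$ on $[0,\infty)$ and $\mathfrak v_0(\xi)=e^{\lambda_0\xi}(1-e^{2\xi})^{(1-\lambda_0)/2}$ on $(-\infty,0)$ (the $\xi\to\pm\infty$–decaying solution of \eqref{L0E2} with $v_+=0$). One checks $\mathfrak v_0\in L^2(\mathbb{R})$ precisely because $\lambda_0<2$, and that $\mathfrak v_0\in{\rm Dom}(L)$: on $(-\infty,0)$, $(1-\phi^2)\mathfrak v_0'=(\lambda_0-\phi\phi')\mathfrak v_0\in L^2$; on $(0,\infty)$ it is $0$; and there is no singular part at $\xi=0$ because $(1-\phi^2)\mathfrak v_0$ is continuous there (recall $(1-\phi^2)\delta_0=0$). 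Since $L\mathfrak v_0=\lambda_0\mathfrak v_0$, the function $\tilde v(\xi,t):=e^{\lambda_0 t}\mathfrak v_0(\xi)$ solves \eqref{eigp32} with datum $\mathfrak v_0$, so by the uniqueness above it is \emph{the} solution, and $\|\tilde v(\cdot,t)\|_{L^2(-\infty,0)}=e^{\lambda_0 t}\|\mathfrak v_0\|_{L^2(-\infty,0)}$ for all $t>0$. (As a consistency check, substituting this $\mathfrak v_0$ into the $\mathbb{R}^-$ characteristic formula and using the defining relation of $\Xi_-$ reproduces $e^{\lambda_0 t}\mathfrak v_0(\xi)$.)

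\textbf{Main obstacle.} The computational core — the characteristics and the two changes of variables — is routine. The delicate points, where I expect most of the work to lie, are (i) showing that the explicitly constructed solution actually lies in ${\rm Dom}(L)\subset L^2(\mathbb{R})$ for every $t>0$ and depends continuously on $t$, and (ii) the accompanying uniqueness claim. Both hinge on the behavior at $\xi=0$, where $1-\phi^2$ degenerates and a ${\rm Dom}(L)$ function may already be singular or have a jump; controlling this (using $(1-\phi^2)\delta_0=0$, exactly as in Lemmas~\ref{lem-projections2}–\ref{lem-reformulation2}) is the same boundary analysis that underlies the reduction from \eqref{eigp2} to \eqref{eigp32}.
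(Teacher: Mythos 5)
Your proposal is correct and follows essentially the same route as the paper's proof: solve \eqref{eigp32} explicitly by characteristics on each half-line (your backward maps $\Xi_\pm$ are just the inverses of the paper's forward characteristic diffeomorphism $q(s,t)$), obtain the two $L^2$ bounds by the corresponding change of variables, and realize the growth rate $e^{\lambda_0 t}$ with the eigenfunction of $L$ from \eqref{Lv0S2} supported on $\xi<0$. The points you flag as delicate (membership in ${\rm Dom}(L)$ and uniqueness) are treated in the paper at essentially the same level of detail, via uniqueness of the ODE along characteristics and the diffeomorphism property of $q(\cdot,t)$.
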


\begin{proof}
To prove global existence and the results concerning the $\LT$ norm, we use the solution to \eqref{eigp2} provided by the method of characteristics in \cite{Chen2019} and specialize it to the case \eqref{eigp32}, in which the term $v_0$ is absent.  

The characteristics curves $q(s,t)$ are defined as
\eq{
\left\{
\begin{array}{l}
\displaystyle{
\frac{dq}{dt}=\phi^2(q)-1,}\\ \\
 q(0,s)=s,
 \end{array}\right.
 }{charcur}
 which has a unique solution because $\phi$ is Lipschitz. It follows that
 \eqnn{
 q_s(s,t)=\exp{\left(2\int_0^t \phi(q(s,\tau))\phi_x(q(s,\tau)) d\tau\right)}>0,
 }
 thus $q(.,t)$ is a diffeomorphism on $\R$. Equation \eqref{charcur} is solved exactly in \cite{Chen2019} as
 \eq{
 q(s,t)=
\left\{
\begin{array}{ll}
\displaystyle{\frac{1}{2}\ln\left[1+(e^{2s}-1)e^{-2t}\right],}&\displaystyle{s>0,}\\ \\
\displaystyle{0,}&\displaystyle{s=0,}\\ \\
\displaystyle{-\frac{1}{2}\ln\left[1+(e^{-2s}-1)e^{2t}\right],}&\displaystyle{s<0.}
\end{array}\right.
}{charsol}
Defining
\eq{
V(s,t)\equiv \tilde v(q(s,t),t)),
}{Vdef}
 it is found that
 \eq{
 \left\{
 \begin{array}{l}
 \displaystyle{\frac{dV}{dt}=\phi_x(q)\phi(q)V}\\ \\
\displaystyle{V(s,0)=\mathfrak{v}_0(s).}
  \end{array}
  \right.
  }{dVdt}
  Equation \eqref{dVdt} is solved by the authors of \cite{Chen2019} as
  \eq{
V(s,t)= \left\{
 \begin{array}{ll}
 \displaystyle{\frac{\mathfrak{v}_0(s)}{\sqrt{1+(e^{2t}-1)e^{-2s}}},}&\displaystyle{s>0,}\\ \\
  \displaystyle{\frac{\mathfrak{v}_0(s)}{\sqrt{1+(e^{-2t}-1)e^{2s}}},}&\displaystyle{s<0.}
  \end{array}
  \right.
  }{Vsol}
  The solution given in \eqref{Vsol} is in $\LT$ for all $t\geq 0$ if $\mathfrak{v}_0(s)\in \LT$. Furthermore, since $q(.,t)$ given in \eqref{charsol} is a diffeomorphism of $\mathbb{R}$ with bounded derivative, we have that from \eqref{Vdef}, $\tilde  v(.,t)\in \LT$. The unicity is a consequence of the unicity of the solution of the initial value problem \eqref{dVdt}.
  
  By the chain rule, we obtain 
\eqnn{
\| \tilde v(\cdot,t) \|^2_{L^2(0,\infty)} = \int_0^{\infty} |\mathfrak{v}_0(s)|^2 \left[ 1 + (e^{2t}-1) e^{-2s} \right]^{-2} ds\leq \| \mathfrak{v}_0 \|^2_{L^2(0,\infty)},
}
and 
\eq{
\| \tilde v(\cdot,t) \|^2_{L^2(-\infty,0)} &= \int^0_{-\infty} |\mathfrak{v}_0(s)|^2 \left[ 1 + (e^{-2t}-1) e^{2s} \right]^{-2} ds\\
&\leq \sup_{s\in (-\infty,0)} \left[ 1 + (e^{-2t}-1) e^{2s} \right]^{-2}  \int^0_{-\infty} |\mathfrak{v}_0(s)|^2  ds\\
&=\left( \sup_{s\in (-\infty,0)} \left(\frac{1}{ \left[e^{2t} + (1-e^{2t}) e^{2s} \right]^{2}} \right)\right) e^{4t}\| \mathfrak{v}_0 \|^2_{L^2(-\infty,0)}\\
&= e^{4t}\| \mathfrak{v}_0 \|^2_{L^2(-\infty,0)}.
}{BPel}
To compute the supremum on the third line of \eqref{BPel},
we used the fact that for $s<0$ and $t>0$, we have
\eq{
e^{2t} + (1-e^{2t}) e^{2s}=e^{2s} + (1-e^{2s}) e^{2t}>0,\\
\frac{\partial}{\partial s}\left(e^{2t} + (1-e^{2t}) e^{2s}\right)=2(1-e^{2t}) e^{2s}<0.
}{Pelsc}
As a consequence of \eqref{Pelsc}, the infimum of the expression $e^{2t} + (1-e^{2t}) e^{2s}$ for $s\in (-\infty,0)$, and fixed $t>0$, is realized at $s=0$. This implies that the value of the supremum on the third line of \eqref{BPel} is indeed equal to 1.

If $\mathfrak{v}_0$ is the eigenfunction of the point spectrum of $L$ for the eigenvalue $\lambda_0 \in (0,{2})$, which, from (\ref{Lv0S2}), is given by 
\eqnn{
\mathfrak{v}_0(\xi) ={e^{\lambda_0 \xi}}{(1-e^{2\xi})^{(1-\lambda_0)/2}}\in L^2(-\infty,0),  \quad \xi < 0,
}
then the corresponding solution to the linearization (\ref{eigp32}) takes the form
$$
\tilde v(\xi,t) = \mathfrak{v}_0(\xi) e^{\lambda_0t},\;\;x<0,
$$
so that $\| \tilde v(\cdot,t) \|_{L^2(-\infty,0)} = \| \mathfrak{v}_0 \|_{L^2(-\infty,0)} e^{\lambda_0 t}$. 
\end{proof}

	\section{Spectral and linear instability on $W^{1,\infty}(\R)$}
	\label{3}

	In \cite{Chen2019}, it is shown that the linearization \eqref{eigp2} has a global solution in $C(\mathbb{R},H^1(\R)\cap \C)$ for initial conditions 
$v(\cdot,0)= \mathfrak{v}_0 \in H^1(\R)\cap \C$, where 
\eqnn{
C^1_0\equiv \left\{v\in C(\R)\cap C^1(\R^+)\cap C^1(\R^-): v,v_\xi\in L^\infty(\R)\right\}.
}
The motivation behind the choice of this space stems from a discussion made about the Camassa-Holm peakons in the Introduction section of \cite{Natali}, where it is remarked that no constraint can be made on the value of the derivative of the solution of the linearized equation $v_x$ at the peak. This fact is illustrated by the result of Theorem 3 in \cite{Natali}. In the Novikov case, the authors of \cite{Chen2019} show that while the $H^1$-norm of the solutions of the linearization with initial condition in $H^1(\R)\cap \C$ remain constant in time, the $W^{1,\infty}(\R)$ norm increases as $e^t$. The latter statement establishes the linear instability on $W^{1,\infty}(\R)$ since $C^1_0\subset W^{1,\infty}(\R)$. In this section, 
we complete the work of \cite{Chen2019} on linear instability by obtaining the spectral instability result on $\HT\cap \C$.  

For $v\in H^1(\R)\cap \C$, we can apply the change of variables of Lemma \ref{lem-reformulation2} and use the linearization \eqref{eigp32} with the condition $\tilde{v}(0,t)=0$. The corresponding eigenvalue problem thus is taken in the space 
{\red{\eq{
\HUZ\cap \C\text{ where }\HUZ\equiv \left\{v\in \HT : v(0)=0\right\}.
}{VZdef}}}
In that space, the domain of $L$, which we rewrite here for convenience as
\beq 
\label{Lop23}
L \equiv (1-\phi^2) \partial_{\xi} + \phi \phi',
\eeq
 is given by
\beq 
\label{Lop-domain3}
{\mbox{Dom}}(L) = \left\{ v\in \HUZ\cap \C : \quad (1-\phi^2) v' \in \HUZ\cap \C \right\}.
\eeq
Note that in the definition of the domain in \eqref{Lop-domain3}, we did not have to include the term $\phi\phi' v$. This is because $\phi \phi' v \in \HUZ\cap \C$ if $v\in \HUZ\cap \C$. Indeed,  $\phi\phi' v$ is continuous, it is zero at $\xi=0$, and its derivative can be computed as 
\eq{(\phi\phi' v)'=\phi'^2 v+\phi\phi''v+\phi\phi'v'=2\phi^2v+\phi\phi'v',}{PelCond}
where we used the facts that $v(0)=0$, $\phi'^2=\phi^2$, and $\phi''=\phi+2\delta_0$. The last expression in \eqref{PelCond} is continuous on $\R^+$ and on $\R^-$, it is bounded, and it is in $\LT$, thus proving the claim that $\phi \phi' v \in \HUZ\cap \C$.


We now state and prove the theorem on the spectrum of the operator $L$. The spectrum is characterized by the condition $0<|\realpart{\lambda}|<1$. This is in agreement with
the results of \cite{Chen2019} stating that the $L^\infty$ norm of the first derivative  of the solution to the linearized equation increases as $e^t$.

	\begin{theorem}
	\label{S3}
	The spectrum of the linear operator $L$ defined on $\HUZ\cap \C$ in (\ref{Lop23})--(\ref{Lop-domain3}) is given by  
	\begin{equation}
	\notag
	\sigma(L) = \left\{ \lambda \in\mathbb{C} : \;|\realpart{\lambda} |\leq 1 \right\}.
	\end{equation}
There is no point spectrum, the  inequality $0<|\realpart{\lambda}|<1$ characterizes the band containing the residual spectral, and the continuous spectrum consists of the imaginary axis and the lines $\realpart{\lambda}=\pm 1$.
\end{theorem}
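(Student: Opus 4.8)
The plan is to follow the template of the proof of Theorem~\ref{S2}, testing the explicit solutions \eqref{Lv0S2} of \eqref{L0E2} and the variation-of-parameters solutions \eqref{Lv0SH0} of the resolvent equation \eqref{L0RE3} against the three requirements of membership in $\HUZ\cap\C$: square integrability (equivalently, decay at $\pm\infty$), continuity on $\R$ with value zero at the peak, and boundedness of both the function and its derivative. The one new feature relative to the $\LT$ analysis is that here the datum $f$ vanishes at the peak, $f(0)=0$; this extra power of $\xi$ near $\xi=0$, together with the $\C$-regularity, is what shifts the critical value of $\realpart\lambda$ from $2$ down to $1$. To begin, $\sigma_p(L)=\emptyset$: by the reflection symmetry \eqref{Sym} we may take $\realpart\lambda\ge 0$; for $\realpart\lambda>0$, $\LT$-decay at $+\infty$ forces $v_+=0$ in \eqref{Lv0S2}, so $v\equiv 0$ on $\xi>0$ and $v(0^-)=0$, and then on $\xi<0$ one has $v(\xi)\sim v_-(-2\xi)^{(1-\lambda)/2}$, whose derivative is of order $(-2\xi)^{-(1+\lambda)/2}$ as $\xi\to 0^-$ and is therefore unbounded whenever $\realpart\lambda>-1$, forcing $v_-=0$; for $\realpart\lambda=0$, neither branch of \eqref{Lv0S2} lies in $\LT$ near $\pm\infty$ unless it vanishes.

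For the resolvent set I would show $\rho(L)\supseteq\{|\realpart\lambda|>1\}$. Take $\realpart\lambda>1$ and solve \eqref{L0RE3} by the variation-of-parameters formula \eqref{Lv0SH0}: the integrals converge at $\pm\infty$ by the l'Hospital computation \eqref{HopEx} and at the peak because $\realpart\lambda>1$. Using the extra vanishing $f(0)=0$, one checks that $v(\xi)=O(\xi)$ as $\xi\to 0^\pm$, so $v$ is continuous, $v(0)=0$, and $v_\xi$ is bounded near the peak; the decay $v\to 0$ at $\pm\infty$ comes from \eqref{HopEx}, and $v\in\LT$ from the convolution structure of \eqref{Lv0SH0} together with Young's inequality (for $\realpart\lambda>2$ one may instead invoke the energy estimate of Theorem~\ref{S2}). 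Finally $(1-\phi^2)v'=f+\lambda v-\phi\phi'v\in\HUZ\cap\C$ by the computation \eqref{PelCond}, so $v\in{\rm Dom}(L)$; combined with $\sigma_p(L)=\emptyset$ this makes $L-\lambda I$ a bijection, and the symmetry of the spectrum about the imaginary axis disposes of $\realpart\lambda<-1$.

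Next, the open band $0<|\realpart\lambda|<1$ belongs to $\sigma_r(L)$. For $0<\realpart\lambda<1$, decay at $+\infty$ forces the $\xi>0$ part of every solution of \eqref{L0RE3} to be $v_0^+(\xi)\int_\infty^\xi[(1-e^{-2\xi'})v_0^+(\xi')]^{-1}f(\xi')\,d\xi'$, and since $f(0)=0$ this integral has the finite value $-\ell_\lambda(f)$ at the peak, where $\ell_\lambda(f):=\int_0^\infty[(1-e^{-2\xi'})v_0^+(\xi')]^{-1}f(\xi')\,d\xi'$. The functional $\ell_\lambda$ is bounded on $\HUZ\cap\C$ precisely for $0<\realpart\lambda<1$ (near $0$, $|f(\xi')|\le\|f_\xi\|_\infty\,\xi'$ is tested against an integrand of size $\xi'^{-(1+\realpart\lambda)/2}$; near $+\infty$ the integrand decays like $e^{-\realpart\lambda\xi'}$), and when $\ell_\lambda(f)\ne 0$ the resulting $v_\xi$ blows up like $\xi^{(\lambda-1)/2}$ as $\xi\to 0^+$, so no solution lies in $\C$; hence ${\rm Ran}(L-\lambda I)\subseteq\ker\ell_\lambda$, a proper closed subspace, the range is not dense, and as there is no point spectrum $\lambda\in\sigma_r(L)$. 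The reflection symmetry gives $-1<\realpart\lambda<0$ as well. Since $\rho(L)$ is open and $\sigma(L)$ is closed, these facts force $\sigma(L)=\{|\realpart\lambda|\le 1\}$ and $\rho(L)=\{|\realpart\lambda|>1\}$.

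Finally, the three lines $\realpart\lambda=0$ and $\realpart\lambda=\pm 1$ lie in $\sigma(L)\setminus\sigma_p(L)$, so it only remains to prove that ${\rm Ran}(L-\lambda I)$ is dense there, i.e.\ by \cite[Lemma~6.2.6]{Buhler18} that $\bar\lambda\notin\sigma_p(L^*)$. I would compute $L^*$ from the duality pairing on $\HUZ\cap\C$, solve its eigenvalue equation explicitly (using the reduction that led to \eqref{Lv0A22}), and impose the constraints dual to decay, vanishing at the peak, and boundedness; the outcome is that $\sigma_p(L^*)\subseteq\{0<|\realpart\lambda|<1\}$, which misses the three lines, so they belong to $\sigma_c(L)$ — and combining $\sigma_r(L)\subseteq\sigma_p(L^*)$ with the previous paragraph gives $\sigma_r(L)=\{0<|\realpart\lambda|<1\}$ exactly. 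The step I expect to be hardest is this last one: making the duality on the non-Hilbert space $\HUZ\cap\C$ precise and thereby pinning down $\sigma_p(L^*)$. A secondary technical point is the $\LT$ bound for the variation-of-parameters solution in the strip $1<\realpart\lambda\le 2$, where the energy estimate used in Theorem~\ref{S2} is not by itself enough.
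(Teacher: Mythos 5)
Your outline is correct and reaches the same conclusions, with the point spectrum and the resolvent set $\{|\realpart{\lambda}|>1\}$ handled exactly as in the paper (explicit solutions \eqref{Lv0S2}, variation of parameters \eqref{Lv0SH0}); where you genuinely diverge is the residual band. The paper obtains $0<|\realpart{\lambda}|<1$ by invoking $\sigma_r(L)=\sigma_p(L^*)$ (Lemma 6.2.6 of \cite{Buhler18}) with the $L^2$-adjoint \eqref{L0a2} and then arguing, somewhat informally, that the eigenfunction \eqref{Lv0A22} ``lies in the dual space'' precisely when $-(3+\realpart{\lambda})/2>-2$; you instead exhibit the annihilating functional $\ell_\lambda$ directly and show ${\rm Ran}(L-\lambda I)\subset\ker\ell_\lambda$ with $\ell_\lambda$ bounded and nonzero on $\HUZ\cap\C$ exactly for $0<\realpart{\lambda}<1$. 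Since $\ell_\lambda$ is nothing but the $L^2$ pairing against the decaying eigenfunction of $L^*$ supported on $\xi>0$, the two arguments are morally identical, but your version is self-contained for the non-density claim and sidesteps the dual-space bookkeeping, which is a genuine gain in rigor for that step; note also that your pointwise bound $|f(\xi)|\le\|f_\xi\|_{L^\infty}|\xi|$ (legitimate here because $f\in\C$) lets you treat the behavior at the peak more cheaply than the paper, which proves its resolvent estimates for general $f\in\HUZ$ via Hardy's inequality (Lemmas \ref{foverxilemma}--\ref{ResLemmaPr}) so that they can be reused in Theorem \ref{S4}. Two caveats: the $L^2(\R^-)$ and $H^1$ estimates for the variation-of-parameters solution in the strip $1<\realpart{\lambda}\le 2$, which you flag but do not carry out, are precisely where the paper spends most of its effort (the $\epsilon$-splitting with Cauchy--Schwarz and the two Hardy inequalities), so they cannot be waved through with Young's inequality alone on the left half-line; and your final step --- excluding the lines $\realpart{\lambda}=0,\pm 1$ from the residual spectrum via $\sigma_p(L^*)$ computed ``from the duality pairing on $\HUZ\cap\C$'' --- is left at essentially the same level of informality as the paper's own treatment, so it is an acknowledged loose end common to both arguments rather than a defect peculiar to yours.
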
 
\begin{proof}
{\noindent}{\noindent}{\em Point spectrum of $L$.} We consider  the differential equation \eqref{L0E2} and assume
$\realpart{\lambda}\geq 0$.
The solution of the differential equation \eqref{L0E2} is given in \eqref{Lv0S2}. Since 
$\realpart{\lambda}\geq 0$, the conditions for $v$ to go to zero 
as $\xi\rightarrow \pm \infty$ is that $v_+=0$ and $\realpart{\lambda}> 0$. Furthermore, looking at the behavior of $v$ as $\xi\rightarrow 0^-$, the function $v$ is in $\HUZ\cap C^1_0$ only if
$
{(1-\realpart{\lambda})}/{2}\geq 1.
$
Since that condition is not compatible with our assumption that $\realpart{\lambda}> 0$, we conclude the $L$ does not have point spectrum on $\HUZ\cap C^1_0$.\\

{\noindent}{\em Residual spectrum of $L$.}  By Lemma 6.2.6 in \cite{Buhler18}, 
if $\sigma_p(L)$ is an empty set, then $\sigma_r(L) = \sigma_p(L^*)$, 
where the adjoint $L^*$ with respect to the $\LT$ pairing is given in \eqref{L0a2}. The solution to the corresponding eigenvalue problem
is given in \eqref{Lv0A22}.
In the case $\realpart{\lambda}\geq0$, the solution goes to zero in the limits $\xi \to \pm \infty$ only if $v_- = 0$ and $\realpart{\lambda}> 0$. 
Furthermore, the solution is in $\tilde{H}^{1*}\cap C^1(\R^+)^*$ only if $-(3+\realpart{\lambda})/2>-2$.
This  leads to the  condition $0<\realpart{\lambda}<1$. Thus $\sigma_r(L)$ is characterized by the condition $0<|\realpart{\lambda}|<1$.  \\

{\noindent}{\noindent}{\em Resolvent set of $L$.} To complete the proof of Theorem \ref{S3}, we prove that the set defined by the inequality $|\realpart{\lambda}|>1$ is included in the resolvent set.
Let us consider the resolvent equation {\red{given in \eqref{L0RE3} with $f \in \HUZ\cap C^1_0$ arbitrary. Also, $\realpart{\lambda} \geq 0$ is assumed without loss of generality.}}

To show that the imaginary axis $\realpart{\lambda}=0$ is not part of the resolvent set, we solve the resolvent equation \eqref{L0RE3} in the case where $\lambda$ is purely imaginary with
$$
f=\frac{(1-e^{-2\xi})^{3/2} e^{\lambda\xi}}{\xi+1},\text{ for }\xi>0.
$$
In that case, the general solution to the resolvent equation \eqref{L0RE3} for $\xi>0$ is given by
\eq{
v=e^{\lambda\xi}(1-e^{-2\xi})^{1/2}\left(\ln(\xi+1)+C\right),
}{SolEss}
which does not converge to zero as $\xi\rightarrow \infty$. Thus, if $\lambda$ is purely imaginary, then ${\rm Ran}(R-\lambda I) \neq X$, implying that the imaginary axis 
is part of the continuous spectrum.

For $\realpart{\lambda}>0$, {\red{we use the expressions given in \eqref{Lv0SH0} that define a solution to  the resolvent equation  \eqref{L0RE3}. For convenience, we rewrite 
the definition of $v$ below as}}
\beq
\label{Lv0SH}
		v(\xi) =
		\left\{ 
		\begin{array}{ll}
v^+_{0}(\xi)\int_\infty^\xi \frac{f(\xi')d\xi'}{(1-e^{-2\xi'})v^+_{0}(\xi')}, & \quad  \xi>0,\\
v^-_{0}(\xi)\int_{0}^\xi \frac{f(\xi')d\xi'}{(1-e^{2\xi'})v^-_{0}(\xi')}, & \quad \xi<0, \end{array}\right.
\eeq
where
$$
v^\pm_{0}\equiv e^{\lambda \xi} (1-e^{\mp 2\xi})^{(1\pm\lambda )/2}.
$$
%
%
%
%

To complete the proof of Theorem \ref{S3}, we prove the following {\red{three lemmas. The first one is a technical lemma which we will use several times in the remainder of this article, while the two others,  used together, show that the region $\realpart{\lambda}>1$ is part of the resolvent set. Lemma \ref{ResLemmaPr} is a preliminary result, while Lemma \ref{ResLemma} directly implies the desired result about the resolvent.

\begin{lemma}
\label{foverxilemma}
If $f\in \HUZ$, with $\HUZ$ defined in \eqref{VZdef}, then $f(\xi)/\xi\in L^2(\R)$. 
\end{lemma}
\begin{proof}
The lemma is a direct consequence of Hardy inequality \cite[Equation (5)]{Hardy2} which states that if $f\in \HT$ with $f(0)=0$, then
$$
\int_0^\infty\frac{f^2(\xi)}{\xi^2}d\xi\leq \int_0^\infty |f'(\xi)|^2 d\xi,\text{ for }f\in C_0^1(0,\infty).
$$
\end{proof}}}

\begin{lemma}
\label{ResLemmaPr}
If $f\in \HUZ$, then the solution $v$ given in \eqref{Lv0SH} is in $\HUZ$  if $\realpart{\lambda}\neq 0$.
\end{lemma}
\begin{proof}
We restrict ourselves to the case $\realpart{\lambda}>0$ since the symmetry \eqref{Sym} can be applied to the resolvent equation \eqref{Lv0SH} if we add the transformation $f(\xi)\rightarrow -f(-\xi)$. Hence, if the lemma is satisfied for $\lambda=\lambda_0$, it is also for $\lambda=-\lambda_0$.

{\em The expression defined in \eqref{Lv0SH} satisfies $\lim_{\xi\to 0}v=0$.} To prove that statement, we compute separately the limits as $\xi\to 0^\pm$. From the right, we rewrite the expression defining $v$ for $\xi>0$ from \eqref{Lv0SH} as
$$ 
v(\xi)=e^{\lambda\xi}(1-e^{-2\xi})^{(1+\lambda)/2}\int_\infty^\xi \frac{f(\xi')d\xi'}{e^{\lambda\xi}(1-e^{-2\xi'})^{(3+\lambda)/2}},\text{ for }\xi>0.
$$
{\red{To compute the limit as $\xi\to 0^+$, we rewrite $v$ as an expression with a denominator that diverges to $\infty$. We then use L'Hospital's rule, as follows}}
\eqnn{
\lim_{\xi\to 0^+} v=\lim_{\xi\to 0^+}\frac{\left(\int_\infty^\xi \frac{f(\xi')d\xi'}{e^{\lambda\xi}(1-e^{-2\xi'})^{(3+\lambda)/2}}\right)'}{\left(e^{-\lambda\xi}(1-e^{-2\xi'})^{-(1+\lambda)/2}\right)'}=
\lim_{\xi\to 0^+} \left(\frac{{-f(\xi)}}{\lambda+e^{- 2\xi}}\right)=0.
}
From the left, the expression for $v$ from \eqref{Lv0SH} can be rewritten as
$$
v(\xi)=e^{\lambda\xi}(1-e^{2\xi})^{(1-\lambda)/2}\int_0^\xi \frac{f(\xi')d\xi'}{e^{\lambda\xi}(1-e^{2\xi'})^{(3-\lambda)/2}},\text{ for }\xi<0.
$$
The integral in the expression above exists for any $\xi<0$, thanks to the fact that $f(\xi)/\xi\in \LT$ by Lemma \ref{foverxilemma}. Thus its limit as $\xi\to 0^-$ is always zero. As a consequence, if $\realpart{\lambda}\leq 1$, it is immediate that $v\to 0$. If $\realpart{\lambda}>1$, the expression in front of the integral is unbounded as $\xi\to 0^-$ and we can apply L'Hospital rule in the following way
$$
\lim_{\xi\to 0^-} v=\lim_{\xi\to 0^-}\frac{\left(\int_0^\xi \frac{f(\xi')d\xi'}{e^{\lambda\xi}(1-e^{2\xi'})^{(3-\lambda)/2}}\right)'}
{\left(e^{-\lambda\xi}(1-e^{2\xi'})^{(\lambda-1)/2}\right)'}=
\lim_{\xi\to 0^-}\left( \frac{{f(\xi)}}{e^{2\xi}-\lambda}\right)=0,\text{ if }\realpart{\lambda}>1.
$$

{\em The expression for $v$ given in \eqref{Lv0SH} is in $\LT$ for all $\realpart{\lambda}>0$.}
To demonstrate that statement, we start by proving that the expression defining $v$ for $\xi>0$ is in $\LTP$ for all $\lambda$ such that $\realpart{\lambda}>0$. To do so, we compute the bound on $|v|$ in the following way
\eqnn{
|v|&\leq \int^\infty_\xi e^{\realpartT{\lambda}(\xi-\xi')}\left(\frac{1-e^{-2\xi}}{1-e^{-2\xi'}}\right)^{(\realpartT{\lambda}+1)/2}\frac{|f(\xi')|}{1-e^{-2\xi'}}d\xi'\\
&\leq \int^0_{-\infty} e^{\realpartT{\lambda} w}\left(\frac{1-e^{-2\xi}}{1-e^{-2(\xi-w)}}\right)^{(\realpartT{\lambda}+1)/2}\frac{|f(\xi-w)|}{1-e^{-2(\xi-w)}}dw,
}
where we made the change of variable $w=\xi-\xi'$. Continuing, we find
\eqnn{
|v|&\leq \sup_{w\in(-\infty,0]} \left(\left(\frac{1-e^{-2\xi}}{1-e^{-2(\xi-w)}}\right)^{(\realpartT{\lambda}+1)/2}\right)\int^0_{-\infty} e^{\realpartT{\lambda} w} \frac{|f(\xi-w)|}{1-e^{-2(\xi-w)}}dw
\\
&= \int^0_{-\infty} e^{\realpartT{\lambda} w} \frac{|f(\xi-w)|}{1-e^{-2(\xi-w)}}dw
\\
&=\left(e^{\realpartT{\lambda} \xi}\chi_ {(-\infty,0]}\right)  \ast \left( \frac{|f(\xi)|}{1-e^{-2\xi}}\right),
}
where $\chi_ {(-\infty,0]}$ is the indicator function.
We use a particular case of Young's convolution inequality
$$
\|f\ast g\|_{\LT}\leq \|f\|_{L^1(\mathbb{R})}\ast \| g\|_{\LT}
$$
to get
\eqnn{
\|v\|_{\LTP}\leq \frac{1}{\realpart{\lambda}} \left\|\frac{f(\xi)}{1-e^{-2\xi}}\right\|_{\LT}.
}
Note that ${f(\xi)}/({1-e^{-2\xi}})$ is in $\LT$ by Lemma \ref{foverxilemma}, since $f$ is in $\HUZ$.


We now prove that the expression defining $v$ for $\xi<0$ in \eqref{Lv0SH} is in $\LTM$. What makes the analysis more tricky is that the exponent $(1-\realpart{\lambda})/2$ used 
in \eqref{Lv0SH} for $\xi<0$ becomes negative when $\realpart{\lambda}<1$. However, the general idea is the same. We first find a bound on $|v|$ as we did in the case $\xi>0$:
\eqnn{
|v|&\leq \int^0_\xi e^{\realpartT{\lambda}(\xi-\xi')}\left(\frac{1-e^{2\xi}}{1-e^{2\xi'}}\right)^{(1-\realpartT{\lambda})/2}\frac{|f(\xi')|}{1-e^{2\xi'}}d\xi'\\
&\leq \int^0_\xi e^{\realpartT{\lambda} w}\left(\frac{1-e^{2\xi}}{1-e^{2(\xi-w)}}\right)^{(1-\realpartT{\lambda})/2}\frac{|f(\xi-w)|}{1-e^{2(\xi-w)}}dw,\;\;w=\xi-\xi',\\
&=\int^0_\xi e^{\realpartT{\lambda} w}\left(\frac{1-e^{2\xi}}{1-e^{2(\xi-w)}}\right)^{(1-\epsilon)/2}\left(\frac{1-e^{2(\xi-w)}}{1-e^{2\xi}}\right)^{(\realpartT{\lambda}-\epsilon)/2}\frac{|f(\xi-w)|}{1-e^{2(\xi-w)}}dw\\
&\leq \sup_{w\in[\xi,0]} \left(\left(\frac{1-e^{2(\xi-w)}}{1-e^{2\xi}}\right)^{(\realpartT{\lambda}-\epsilon)/2}\right)\int^0_\xi \left(\frac{1-e^{2\xi}}{1-e^{2(\xi-w)}}\right)^{(1-\epsilon)/2}e^{\realpartT{\lambda} w} \frac{|f(\xi-w)|}{1-e^{2(\xi-w)}}dw,
}
where $\epsilon$ is any number such that $0<\epsilon\leq \min\left(1,\realpart{\lambda}\right)$, so that the the two exponents in the expression above are positive, $(1-\epsilon)/2>0$ and $(\realpart{\lambda}-\epsilon)/2>0$. We continue
\eq{ 
|v|&\leq \int^0_\xi \frac{e^{\realpartT{\lambda} w}}{(1-e^{2(\xi-w)})^{(1-\epsilon)/2}} \frac{|f(\xi-w)|}{1-e^{2(\xi-w)}}dw\\
&\leq \left(\int^0_\xi \frac{e^{\realpartT{\lambda} w}}{(1-e^{2(\xi-w)})^{1-\epsilon}} dw\right)^{1/2} \left(\int^0_\xi \frac{e^{\realpartT{\lambda} w}|f(\xi-w)|^2}{(1-e^{2(\xi-w)})^2}dw\right)^{1/2},
}{insuccess}
where we used the Cauchy-Schwartz inequality. We rewrite the first integral in the last equation in \eqref{insuccess} as
\eq{
\int^0_\xi \frac{e^{\realpartT{\lambda} w}}{(1-e^{2(\xi-w)})^{1-\epsilon}} dw=e^{\realpartT{\lambda} \xi} \int^0_\xi \frac{e^{-\realpartT{\lambda} (\xi-w)}}{(1-e^{2(\xi-w)})^{1-\epsilon}} dw}{cov}
where we used the change of variable $z=e^{2(\xi-w)}$. From the last expression in \eqref{cov}, it can be deduced 
that the integral is zero when $\xi=0$.
For the limit as $\xi\to-\infty$, we use L'Hospital's rule in the following way
$$
\lim_{\xi\to -\infty}\left(e^{\realpartT{\lambda} \xi} \int^0_\xi \frac{e^{-\realpartT{\lambda} (\xi-w)}}{(1-e^{2(\xi-w)})^{1-\epsilon}} dw\right)
=\lim_{\xi\to -\infty}\frac{\left(\int^0_\xi \frac{e^{-\realpartT{\lambda} (\xi-w)}}{(1-e^{2(\xi-w)})^{1-\epsilon}} dw\right)'}{(e^{-\realpartT{\lambda} \xi})'}
=\lim_{\xi\to -\infty}\frac{1}{\realpartT{\lambda} e^{-\realpart{\lambda} \xi}}=0.
$$

Thus the expression as a whole is continuous and 
bounded for $\xi\in(-\infty,0]$ and we have   
\eqnn{ 
|v|\leq M\left(\int^0_\xi \frac{e^{\realpartT{\lambda} w}|f(\xi-w)|^2}{(1-e^{2(\xi-w)})^2}dw\right)^{1/2},\text{ where }M\equiv \sup_{\xi\in(-\infty,0]} \left(\left(\int^0_\xi \frac{e^{\realpartT{\lambda} w}}{(1-e^{2(\xi-w)})^{1-\epsilon}} dw\right)^{1/2}\right).
}
We then have
\eqnn{
\|v\|_{\LTM}^2&\leq M^2 \left(\int_{-\infty}^0 \int^0_{\xi} \frac{e^{\realpartT{\lambda} w}|f(\xi-w)|^2}{(1-e^{2(\xi-w)})^2}dw\,d\xi\right)\\
&=M^2 \left(\int_{-\infty}^0 e^{\realpartT{\lambda} w}\int^w_{-\infty} \frac{|f(\xi-w)|^2}{(1-e^{2(\xi-w)})^2}d\xi\,dw\right)\\
&\leq M^2\left\| \frac{|f(\xi)|}{|1-e^{2\xi}|}\right\|_{\LT}^2 \left(\int_{-\infty}^0 e^{\realpartT{\lambda}w}dw\right),}
thus
\eqnn{
\|v\|_{\LTM}\leq  \frac{M}{\sqrt{\realpart{\lambda}}}\left\| \frac{|f(\xi)|}{|1-e^{2\xi}|}\right\|_{\LT}. 
}
We note as we did before that ${f(\xi)}/({1-e^{-2\xi}})$ is in $\LT$ by Lemma \ref{foverxilemma} since $f$ is in $\HUZ$.

{\em The expression for $v$ given in \eqref{Lv0SH} is in $H^1(\R)$ for all $\realpart{\lambda}>0$.} To demonstrate that statement, it suffices to prove that
\eq{
\tilde{v}\equiv\frac{v}{1-\phi^2}\in \LT.
}{condLd}
Indeed, since $|\tilde{v}|\leq |v|$, Statement \eqref{condLd} implies that $v \in \LT$. Furthermore, if we solve the resolvent equation \eqref{L0RE3} for $v_\xi$ we find
\eq{
v_\xi=\frac{(\lambda-\phi\phi')v}{1-\phi^2}+\frac{f}{1-\phi^2}.
}{vxi3}
Since the second term on the RHS of \eqref{vxi3} is in $\LT$ by Lemma \ref{foverxilemma}, we have that Statement 
\eqref{condLd} also implies that $v_\xi\in \LT$.


To prove \eqref{condLd}, we start by showing that $v/(1-\phi^2) \in \LTP$ using the part of \eqref{Lv0SH} defining $v$ for $\xi>0$ and we compute
\eqnn{
\frac{v}{1-\phi^2}={e^{\lambda\xi}(1-e^{-\xi})^{(\lambda-1)/2}} \int_\infty^\xi \frac{f(\xi')d\xi'}{e^{\lambda \xi'}(1-e^{-\xi'})^{(3+\lambda)/2}}.
}
Since $v\in\LT$, we are really interested the behavior for small $\xi$, meaning we need only to prove that
\eq{
{\xi^{(\lambda-1)/2}} \int_\infty^\xi \frac{f(\xi')d\xi'}{\xi'^{(3+\lambda)/2}}\in \LTP.
}{enfin4} 
To do so, we prove that an upper bound for the $L^2(\mathbb{R})$ norm of the integral in \eqref{enfin4} is bounded. That is, we prove that  
\eq{
\int_0^\infty \left({\xi^{(\realpartT{\lambda}-1)/2}} \int_\infty^\xi \frac{|f(\xi')|d\xi'}{\xi'^{(3+\realpartT{\lambda})/2}}\right)^2d\xi<\infty.
}{enfin5} 
To show \eqref{enfin5}, we apply the following Hardy inequality {\red{\cite[\S 329]{Hardy}, which states that if $g$ is integrable and non-negative, then
\eq{
\int_0^\infty\left(y^{\alpha-1}\int_y^\infty x^{-\alpha}g(x)dx\right)^pdy\leq \frac{1}{(\alpha+1/p-1)^p}\int_0^\infty g(x)^pdx,\text{ if }\alpha+1/p>1.
}{Hardy1}}}
Using \eqref{Hardy1} with $\alpha=(\realpart{\lambda}+1)/2$ and $p=2$, we find
\eq{
\int_0^\infty \left({\xi^{(\realpartT{\lambda}-1)/2}} \int_\xi^\infty \frac{|f(\xi')/\xi'|d\xi'}{\xi'^{(1+\realpartT{\lambda})/2}}\right)^2d\xi\leq
\frac{4}{\realpart{\lambda}^2}\int_0^\infty\left|\frac{f(\xi)}{\xi}\right|^2d\xi<\infty,
}{enfin6} 
where the last inequality holds because $f(\xi)/\xi\in \LTP$ as a consequence of Lemma \ref{foverxilemma}.

To prove that $v(\xi)/\xi$, with $v$ as given in \eqref{Lv0SH}, is in $\LTM$, we use the expression of $v$ given in the case $\xi<0$ and find that we need to show that
 \eq{
\int_0^\infty \left({\xi^{-(\realpartT{\lambda}+1)/2}} \int_0^\xi \frac{|f(\xi')|d\xi'}{\xi'^{(3-\realpartT{\lambda})/2}}\right)^2d\xi<\infty.
}{enfin7} 
To show \eqref{enfin7}, we apply the following Hardy inequality {\red{\cite[\S 329]{Hardy}}} 
\eqnn{
\int_0^\infty\left(y^{\alpha-1}\int_0^y x^{-\alpha}g(x)dx\right)^pdy\leq \frac{1}{(1-\alpha-1/p)^p}\int_0^\infty g(x)^pdx,\text{ if }\alpha+1/p<1,
}
with $\alpha=(1-\realpart{\lambda})/2$ and $p=2$ to establish that
\eqnn{
\int_0^\infty \left({\xi^{-(\realpart{\lambda}+1)/2}} \int_0^\xi \frac{|f(\xi')/\xi'|d\xi'}{\xi'^{(1-\realpartT{\lambda})/2}}\right)^2d\xi \leq
\frac{4}{\realpart{\lambda}^2}\int_0^\infty\left|\frac{f(\xi)}{\xi}\right|^2d\xi<\infty,
}
which concludes the proof of the lemma.
\end{proof}

\begin{lemma}
\label{ResLemma}
If $|\realpart{\lambda}|> 1$, the solution $v$ given in \eqref{Lv0SH} is in $\HUZ\cap C^1_0$ for all $f \in \HUZ\cap C^1_0$.
\end{lemma}
\begin{proof}
As a consequence of the symmetry \eqref{Sym}, we need only to prove the lemma for the case $\realpart{\lambda}>1$

The expression for $v$ given in \eqref{Lv0SH} is in $\HUZ$ for all $\realpart{\lambda}>0$ by Lemma \ref{ResLemmaPr}.

{\noindent}{\noindent}{\em The derivative of the function $v$ defined in \eqref{Lv0SH} is in $C(\R^+)\cap C(\R^-)$ if $\realpart{\lambda}>1$.} To prove this statement, we solve the resolvent equation \eqref{L0RE3} for $v_\xi$
\eqnn{
v_\xi=\frac{(\lambda-\phi\phi')v+f}{1-\phi^2}.
}
Since $f\in \C$, we have that $f/(1-\phi^2)\in C(\R^+)\cap C(\R^-)$. It thus suffices to prove that $v(\xi)/\xi\in C(\R^+)\cap C(\R^-)$.
From the expression for $v$ given in  \eqref{Lv0SH}, we have that $v(\xi)/\xi$ is continuous everywhere except perhaps at $\xi=0$. We thus only need to prove that $v(\xi)/\xi$ has a limit on the right and on the left as $\xi\to 0$. 
 First, from the right, we obtain
$$
\lim_{\xi\to 0^+} \left(\frac{v(\xi)}{\xi}\right)=\lim_{\xi\to 0^+}\left(\frac{e^{\lambda\xi}(1-e^{-2\xi})^{(1+\lambda)/2}}{\xi}\int_\infty^\xi \frac{f(\xi')d\xi'}{e^{\lambda\xi}(1-e^{-2\xi'})^{(3+\lambda)/2}}\right).
$$
{\red{To compute the limit above, we rewrite the expression as a ratio with a denominator that diverges to $\infty$. We then use L'Hospital's rule, as follows}}
$$
\lim_{\xi\to 0^+} \left(\frac{v(\xi)}{\xi}\right)=
\lim_{\xi\to 0^+}\frac{\left(\int_\infty^\xi \frac{f(\xi')d\xi'}{(e^{\lambda\xi}(1-e^{-2\xi'})^{(3+\lambda)/2}}\right)'}{\left(\xi e^{-\lambda\xi}(1-e^{-2\xi'})^{-(1+\lambda)/2}\right)'}=
\lim_{\xi\to 0^+} \left(\frac{{f(\xi)}}{1-\lambda \xi-e^{-2\xi}(\xi+1)}\right),
$$
{\red{which exists, is finite, and equal to $f(0)$}}, since $f\in\C$. 
From the left, we compute
$$
\lim_{\xi\to 0^-} \left(\frac{v(\xi)}{\xi}\right)=
\left(\frac{e^{\lambda\xi}(1-e^{2\xi})^{(1-\lambda)/2}}{\xi}\int_0^\xi \frac{f(\xi')d\xi'}{e^{\lambda\xi}(1-e^{2\xi'})^{(3-\lambda)/2}}\right).
$$
Since the integral in the expression above converges to zero and the expression in front diverges when $\realpart{\lambda}>1$, we use L'Hospital's rule as 
$$
\lim_{\xi\to 0^-} \left(\frac{v(\xi)}{\xi}\right)=
\lim_{\xi\to 0^-}\frac{\left(\int_0^\xi \frac{f(\xi')d\xi'}{e^{\lambda\xi}(1-e^{2\xi'})^{(3-\lambda)/2}}\right)'}
{\left(\xi e^{-\lambda\xi}(1-e^{2\xi'})^{(\lambda-1)/2}\right)'} v=\lim_{\xi\to 0^-} \frac{{f(\xi)}}{1-\lambda\xi+e^{2\xi}(\xi-1)},
$$
which, again, exists due to the fact that $f\in\C$.

We thus have that $v$ as given in \eqref{Lv0SH} is in $\HUZ$ and its derivative is in $C(\R^+)\cap C(\R^-)$. We thus have proven the lemma.

\end{proof}

\end{proof}


\section{Spectral and linear stability on $H^1(\R)$}
\label{4}

In $H^1(\R)$, we also apply the change of variables of Lemma \ref{lem-reformulation2} and use the linearization \eqref{eigp32} with the condition $\tilde{v}(0,t)=0$. This motivates considering the space {\red{$\HUZ$ defined in \eqref{VZdef}}}  and the linear operator given in (\ref{Lop2}), which we rewrite here for convenience as 
\beq 
\label{Lop3}
L \equiv (1-\phi^2) \partial_{\xi} + \phi \phi'.
\eeq
On $\HUZ$, the domain of $L$ is given by
\beq 
\label{Lop-domain4}
{\mbox{Dom}}(L) = \left\{ v\in \HUZ: \quad (1-\phi^2) v' \in \HUZ \right\}.
\eeq

The following Theorem establishes the spectral stability on $\HT$.  
	\begin{theorem}
	\label{S4}
	The spectrum of the linear operator $L$ defined by (\ref{Lop3})--(\ref{Lop-domain4}) on $\HUZ$ is given by  
	\begin{equation}
	\notag
	\sigma(L) = \left\{ \lambda \in\mathbb{C} : \;\realpart{\lambda} =0 \right\},
	\end{equation}
and it contains continuous spectrum only.
\end{theorem}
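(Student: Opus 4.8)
\emph{Proof strategy.} I would prove Theorem~\ref{S4} along the four lines used for Theorems~\ref{S2} and \ref{S3}, since most of the analytic input is already available. The plan is: (a) show that $L$ has empty point spectrum; (b) show that every $\lambda$ with $\realpart{\lambda}\neq 0$ belongs to the resolvent set, so that $\sigma(L)\subseteq\{\realpart{\lambda}=0\}$; (c) show that $L$ has empty residual spectrum; and (d) show that every purely imaginary $\lambda$ belongs to $\sigma(L)$. Items (a), (c) and (d) then place $i\R$ in $\sigma_{\rm c}(L)$, and together with (b) this gives $\sigma(L)=\sigma_{\rm c}(L)=\{\realpart{\lambda}=0\}$.

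For (a), using the symmetry \eqref{Sym} I would take $\realpart{\lambda}\geq 0$ and work with the explicit solution \eqref{Lv0S2} of \eqref{L0E2}. Decay of $v$ at $\pm\infty$ forces $v_+=0$ and $\realpart{\lambda}>0$, after which the surviving branch behaves like $v_-(2|\xi|)^{(1-\realpart{\lambda})/2}$ as $\xi\to 0^-$; being in $\HUZ$ (equivalently, having $v'$ square integrable near the origin, equivalently $(1-\phi^2)v'\in\HUZ$) then requires $(1-\realpart{\lambda})/2>\tfrac12$, that is $\realpart{\lambda}<0$. These are incompatible, and on the imaginary axis no nontrivial solution decays at infinity, so $\sigma_{\rm p}(L)=\emptyset$. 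For (b), fix $f\in\HUZ$ and $\realpart{\lambda}\neq 0$; by Lemma~\ref{ResLemmaPr} the function $v$ of \eqref{Lv0SH} solves the resolvent equation \eqref{L0RE3} and belongs to $\HUZ$. Rewriting \eqref{L0RE3} as $(1-\phi^2)v'=(\lambda-\phi\phi')v+f$ and using that $\phi\phi'v\in\HUZ$ whenever $v\in\HUZ$ — which is the content of the computation \eqref{PelCond} — shows $(1-\phi^2)v'\in\HUZ$, that is $v\in{\rm Dom}(L)$ as defined in \eqref{Lop-domain4}. Together with ${\rm Ker}(L-\lambda I)=\{0\}$ from (a), this makes $L-\lambda I\colon{\rm Dom}(L)\to\HUZ$ a bijection, so $\lambda\in\rho(L)$ and $\sigma(L)\subseteq\{\realpart{\lambda}=0\}$.

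For (c), I would invoke Lemma~6.2.6 of \cite{Buhler18}: since $\sigma_{\rm p}(L)=\emptyset$, one has $\sigma_{\rm r}(L)=\sigma_{\rm p}(L^*)$, where $L^*$ is the formal $\LT$-adjoint \eqref{L0a2}, now read as an operator on the dual space $\tilde{H}^{1*}$, with eigenfunctions given by \eqref{Lv0A22}. Arguing as in the proof of Theorem~\ref{S3}, for $\realpart{\lambda}\geq 0$ an admissible eigenfunction requires $v_-=0$ and $\realpart{\lambda}>0$, after which it behaves like $\xi^{-(3+\realpart{\lambda})/2}$ as $\xi\to 0^+$. Such a power defines a bounded functional on $\HUZ$ — equivalently, it lies in $H^{-1}(\R)$ — precisely when $(3+\realpart{\lambda})/2<\tfrac32$, that is $\realpart{\lambda}<0$. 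This is the sharpened analogue of the condition $-(3+\realpart{\lambda})/2>-2$ appearing in Theorem~\ref{S3}, the difference being that elements of $\HUZ$ vanish only like $|\xi|^{1/2}$ at the origin rather than like $|\xi|$. Since $\realpart{\lambda}>0$ and $\realpart{\lambda}<0$ are incompatible — and $\realpart{\lambda}=0$ is excluded by the same condition at the origin — we get $\sigma_{\rm p}(L^*)=\emptyset$, hence $\sigma_{\rm r}(L)=\emptyset$.

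For (d), I would reuse the construction from the proof of Theorem~\ref{S3}: for purely imaginary $\lambda$ take $f(\xi)=(1-e^{-2\xi})^{3/2}e^{\lambda\xi}/(\xi+1)$ for $\xi>0$, extended by $0$ for $\xi\leq 0$; then $f\in\HUZ$ (it vanishes like $\xi^{3/2}$ at the origin and decays like $1/\xi$ at $+\infty$), while every solution of \eqref{L0RE3} on $\R^+$ has the form \eqref{SolEss}, which does not tend to $0$ as $\xi\to+\infty$ and is therefore not in $\HUZ$. Hence $L-\lambda I$ is not onto, $\lambda\in\sigma(L)$, and by (a) and (c) necessarily $\lambda\in\sigma_{\rm c}(L)$; with (b) this gives $\sigma(L)=\sigma_{\rm c}(L)=\{\realpart{\lambda}=0\}$. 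The step I expect to require the most care is (c): deciding exactly which negative powers of $\xi$ near the origin define bounded functionals on $\HUZ$, which comes down to describing $\tilde{H}^{1*}$ concretely and reading off the Sobolev regularity of $\xi_+^{-\beta}$. The remaining steps are direct adaptations of the $\LT$ and $W^{1,\infty}$ analyses, or citations of Lemma~\ref{ResLemmaPr} and of the construction in Theorem~\ref{S3}.
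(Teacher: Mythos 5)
Your proposal is correct and takes essentially the same route as the paper's proof: empty point spectrum from the explicit solutions \eqref{Lv0S2}, empty residual spectrum via Lemma 6.2.6 of \cite{Buhler18} and \eqref{Lv0A22} with the threshold $-(3+\realpart{\lambda})/2>-3/2$, the imaginary axis placed in $\sigma_{\rm c}(L)$ by the same construction that produces \eqref{SolEss}, and the rest of the plane placed in the resolvent set by Lemma \ref{ResLemmaPr}. Your explicit check that $v\in{\rm Dom}(L)$, using $(1-\phi^2)v'=\lambda v-\phi\phi' v+f$ and the computation \eqref{PelCond}, is a small step the paper leaves implicit but is consistent with it.
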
 

\begin{proof}
{\noindent}{\noindent}{\em Point spectrum of $L$.} The argument that $L$ does not have point spectrum on $\HUZ$ is almost identical to the part of the proof of Theorem \ref{S3}
showing that $L$ does not have point spectrum on $\HUZ\cap C^1(\R^+)$. Like in the proof of Theorem \ref{S3}, we look at the solution of the differential equation \eqref{L0E2} in \eqref{Lv0S2} and conclude that to have a nontrivial solution that goes to zero 
as $\xi\rightarrow \pm \infty$ when $\realpart{\lambda}\geq 0$, we need $v_+=0$ and $\realpart{\lambda}> 0$. Furthermore, looking at the behavior of $v$ as $\xi\rightarrow 0^-$, the function $v$ is in $\HUZ$ only if
$
{(1-\realpart{\lambda})}/{2}>1/2.
$
Since we also have that $\realpart{\lambda}> 0$, we conclude the $L$ does not have point spectrum on $\HUZ$.\\

{\noindent}{\em Residual spectrum of $L$.}  Again, as in the proof of Theorem \ref{S3}, we use the Lemma 6.2.6 in \cite{Buhler18}, 
that says that if $\sigma_p(L)$ is an empty set, then $\sigma_r(L) = \sigma_p(L^*)$, 
where $L^*$  is given in \eqref{L0a2}. We use the expressions given in \eqref{Lv0A22} to conclude that the solution of the eigenvalue problem for $L^*$ is nontrivial and goes to zero in the limits $\xi \to \pm \infty$ only if $v_- = 0$ and $\realpart{\lambda}> 0$. 
Furthermore, the solutions given in \eqref{Lv0A22} is in $H^{1}_0(\R)^*$ only if $-(3+\realpart{\lambda})/2>-3/2$.
Together with the condition $\realpart{\lambda}> 0$, his  leads the conclusion that $\sigma_r(L) = \sigma_p(L^*)=\emptyset$.  \\

{\noindent}{\em The continuous spectrum of $L$ contains the set $i\mathbb{R}$.} By the same argument leading to the expression \eqref{SolEss} in the proof of Theorem \ref{S3}, we obtain that the imaginary axis is indeed contained in $\sigma_c(L)$.\\

{\noindent}{\em The resolvent set of $L$.} The solution $v$ given in  \eqref{Lv0SH} of the resolvent equation  is in $\HUZ$ for all $f\in\HUZ$ if 
$\realpart{\lambda}\neq 0$ by Lemma \ref{ResLemmaPr}. Thus ${{\rm Ran}(A-\lambda I)} = \HUZ$ and $\lambda\notin \sigma_c(L)$ if $\realpart{\lambda}\neq 0$ (see Definition \ref{Bigdef}). Since we established that $\sigma_p(L)=\sigma_r(L)=\emptyset$, we have that the resolvent set consists of the whole complex plane except the imaginary axis.

\end{proof}

We now prove the following result about linear stability on $\HUZ$, in agreement with the spectral stability established by Theorem \ref{S4}.  

\begin{proposition}
	\label{prop-finalH10}
	For every $\mathfrak{v}_0 \in {\rm Dom}(L) \subset \HUZ$ given in \eqref{Lop-domain4}, the initial-value problem (\ref{eigp32}) with the condition $v(\xi,0)=\mathfrak{v}_0(\xi)$ admits a unique global solution $v$ 
satisfying the properties that $\| v(\cdot,t) \|_{\HUZ} = \| \mathfrak{v}_0 \|_{\HUZ}$. 
\end{proposition}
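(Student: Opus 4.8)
The plan is to use the explicit solution of the linearized equation \eqref{eigp32} furnished by the method of characteristics in the proof of Proposition \ref{prop-final} -- now with data in $\HUZ$ -- and to show by a change of variables that the $H^1$ norm is exactly preserved. Global existence and uniqueness come straight from that construction: the characteristics $q(s,t)$ of \eqref{charcur} are globally defined, $q(\cdot,t)$ is a diffeomorphism of $\R$ with bounded derivative, $V(s,t)=v(q(s,t),t)$ is the solution \eqref{Vsol} of the linear ODE \eqref{dVdt}, and uniqueness follows from uniqueness for \eqref{dVdt}. Since $\mathfrak{v}_0\in\HUZ$ forces $\mathfrak{v}_0(0)=0$, letting $s\to 0$ in \eqref{Vsol} gives $v(0,t)=0$ by continuity, so $v(\cdot,t)\in\HUZ$ for all $t$; that $v(\cdot,t)$ in fact remains in ${\rm Dom}(L)$ of \eqref{Lop-domain4} is a routine, if slightly technical, verification from the explicit formulas together with $\mathfrak{v}_0\in{\rm Dom}(L)$, parallel to the $L^2$ argument in Proposition \ref{prop-final}.

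The core of the proof is the identity $\|v(\cdot,t)\|_{\HUZ}=\|\mathfrak{v}_0\|_{\HUZ}$. The first observation is that the denominators appearing in \eqref{Vsol} are exactly $q_s(s,t)^{-1/2}$, so \eqref{charsol}--\eqref{Vsol} collapse to the compact form
$$ v\big(q(s,t),t\big)=\mathfrak{v}_0(s)\,\sqrt{q_s(s,t)}. $$
Viewing $q_s$ as a function of $\xi$ through $\xi=q(s,t)$ and setting $\mu:=\tfrac12\,\partial_\xi q_s$, differentiation in $\xi$ (using $\partial_\xi s=1/q_s$) gives $v_\xi\big(q(s,t),t\big)=q_s^{-1/2}\big(\mathfrak{v}_0'(s)+\mu\,\mathfrak{v}_0(s)\big)$, so that after the change of variables $\xi=q(s,t)$, $d\xi=q_s\,ds$,
$$ \|v(\cdot,t)\|_{\HUZ}^2=\int_\R\Big(|\mathfrak{v}_0'(s)|^2+2\mu\,\mathfrak{v}_0\mathfrak{v}_0'+\big(q_s^2+\mu^2\big)\,|\mathfrak{v}_0(s)|^2\Big)\,ds. $$

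From $q_s=1-(1-e^{-2t})e^{-2\xi}$ for $\xi>0$ and $q_s=1+(e^{2t}-1)e^{2\xi}$ for $\xi<0$ one reads off the two identities $q_s+\operatorname{sgn}(s)\,\mu=1$ and $\partial_s q_s=2q_s\mu$, hence $\partial_s\mu=-2\operatorname{sgn}(s)\,q_s\mu$. These do all the work. Integrating the cross term by parts on each half-line, $\int 2\mu\,\mathfrak{v}_0\mathfrak{v}_0'\,ds=\int\mu\,(\mathfrak{v}_0^2)'\,ds=-\int(\partial_s\mu)\,\mathfrak{v}_0^2\,ds$, the boundary contribution at $s=0$ vanishes because $\mathfrak{v}_0(0)=0$ and the one at $s=\pm\infty$ because $\mathfrak{v}_0\in H^1$; substituting $\partial_s\mu=-2\operatorname{sgn}(s)q_s\mu$ turns the coefficient of $|\mathfrak{v}_0|^2$ into $q_s^2+\mu^2+2\operatorname{sgn}(s)q_s\mu=(q_s+\operatorname{sgn}(s)\mu)^2=1$, so that everything $t$-dependent cancels and $\|v(\cdot,t)\|_{\HUZ}^2=\int_\R\big(|\mathfrak{v}_0'|^2+|\mathfrak{v}_0|^2\big)\,ds=\|\mathfrak{v}_0\|_{\HUZ}^2$.

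The same conservation can be seen more conceptually, with the same catch, straight from the equation: a formal energy estimate gives $\frac{d}{dt}\|v\|_{L^2}^2=4\langle\phi\phi'v,v\rangle$ and $\frac{d}{dt}\|v_\xi\|_{L^2}^2=-4\langle\phi\phi'v,v\rangle$, the latter after integrating by parts the $(1-\phi^2)v_{\xi\xi}$ term and dropping the distributional $\delta_0$ produced by $\phi''$, which is legitimate precisely because $v(0,t)=0$. I expect the step needing the most care -- and it is the same in both routes -- to be the integration by parts at the peak $\xi=0$, where the peakon profile is only Lipschitz: one must use that $v(\cdot,t)\in{\rm Dom}(L)$ forces $(1-\phi^2)v_\xi$ to be continuous and to vanish at $\xi=0$ (since $(1-\phi^2)v_\xi\in\HUZ$), whence $(1-\phi^2)|v_\xi|^2\to 0$ there and no boundary term survives. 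This is the one place where membership in ${\rm Dom}(L)$, rather than merely in $\HUZ$, is essential.
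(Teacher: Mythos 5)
Your proposal is correct, and its skeleton coincides with the paper's: existence, uniqueness, and membership of $v(\cdot,t)$ in $\HUZ$ are obtained exactly as in Proposition \ref{prop-final}, from the characteristics \eqref{charcur}--\eqref{charsol}, the explicit solution \eqref{Vsol} of \eqref{dVdt}, the fact that $q(\cdot,t)$ is a diffeomorphism with bounded derivative, and $\mathfrak{v}_0(0)=q(0,t)=0$. The only place where you genuinely diverge is the norm identity: the paper disposes of it in one line by observing that the argument of Lemma 2.4 of \cite{Chen2019} (stated there for data in $\HT\cap\C$) carries over to $\HUZ$, whereas you prove it self-containedly. Your computation checks out: the denominators in \eqref{Vsol} are indeed $q_s^{-1/2}$, so $v(q(s,t),t)=\mathfrak{v}_0(s)\sqrt{q_s}$; with $\mu=\tfrac12\partial_\xi q_s$ one has $q_s=1-(1-e^{-2t})e^{-2\xi}$ for $\xi>0$, $q_s=1+(e^{2t}-1)e^{2\xi}$ for $\xi<0$, hence $q_s+\operatorname{sgn}(s)\mu=1$ and $\partial_s q_s=2q_s\mu$, and after the change of variables and the integration by parts of the cross term (whose boundary contribution at $s=0$ vanishes precisely because $\mathfrak{v}_0(0)=0$) the coefficient of $|\mathfrak{v}_0|^2$ collapses to $(q_s+\operatorname{sgn}(s)\mu)^2=1$, giving exact conservation of the $\HUZ$ norm. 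What your route buys is a transparent, citation-free proof that also makes visible exactly where the constraint $v(0,t)=0$ enters (the boundary term at the peak), which is the same mechanism your alternative energy-identity sketch isolates; what the paper's route buys is brevity, at the cost of asking the reader to adapt an argument from \cite{Chen2019}. One small caveat: for complex-valued perturbations the cross term should be written as $\mu\,\partial_s|\mathfrak{v}_0|^2$ (i.e.\ with $2\,\realpartT{\bar{\mathfrak{v}}_0\mathfrak{v}_0'}$), which changes nothing in the cancellation.
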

\begin{proof}
We use the solution of the linearized equation found by the characteristic method defined in \eqref{charsol}, \eqref{Vdef}, and \eqref{Vsol}. We already established that the solution is in $\LT$ in the proof of Proposition \ref{prop-final}. To see why the solution is in $\HUZ$ for all $t>0$, we firsat differentiate $V(s,t)$ with respect to $s$ to see that $V$ is in $\HUZ$. Then, we conclude that $v$ given by the relation \eqref{Vdef} itself is in $\HUZ$ due to the fact that $q$ given in \eqref{charsol} is a diffeomorphism of $\R$ with a bounded $s$ derivative. Note that $v(0,t)=0$ since $\mathfrak{v}_0(0)=q(0,t)=0$. The unicity result comes from the fact that the solution \eqref{Vsol} is the unique solution to the equation \eqref{dVdt}.

The result concerning the $\HUZ$ norm is proven by noticing that the argument used to prove that same result in \cite{Chen2019}[Lemma 2.4]  for the case where $\mathfrak{v}_0\in \HT\cap\C$ also applies when $\mathfrak{v}_0\in \HUZ$.  
\end{proof}

\section{Conclusion}
\label{5}

In this article, we studied the spectral and linear stability the peakons of the Novikov equation. We found that they are unstable on $\LT$ and $W^{1,\infty}$, and they are stable on $\HT$. Our results are in agreement with the linear instability results of \cite{Chen2019} on $W^{1,\infty}$ and the orbital stability results of \cite{Chen2021,Palacio2020,Palacios2021} on $\HT$. 

Per the discussion in the Introduction section, there is no easy relation between linear and orbital stability for peakon solutions. For example, consider the case of the $b$-family of peakon equations 
\eqnn{
u_t - u_{xxt} +(b+1)uu_x=bu_x u_{xx} + u u_{xxx}. 
}
This family generalizes the classical cases of the Camassa--Holm equation \cite{Fokas,Cam,Cam2} for $b = 2$ and the Degasperis--Procesi equation \cite{dp,dhh} for $b = 3$. For $b > 1$, numerical simulations in \cite{Holm1,Holm2} showed that 
arbitrary initial data asymptotically resolves into a number of peakons. 
Orbital stability of peakons in the energy space $H^1(\mathbb{R})$ was shown 
for the Camassa-Holm equation in \cite{Const5,Cons1} by using 
conservation of two energy integrals. This method was extended 
in \cite{LinLiu}, where the authors showed orbital stability of 
peakons for the Degasperis-Procesi equation in $L^2(\mathbb{R})$.

On the linear level, the authors of \cite{Natali} proved the time exponential growth of the $\HT$ norm of the solutions of the linearized Camassa-Holm equation. In the more general case of the $b$-family, the linear instability of the peakons on $\LT$ has been obtained in \cite{Lafortune2022}.  The authors of \cite{Char1} considered the spaces $H^s(\R^+)\cap H^s(\R^-)$, $s\geq 1$, thus allowing jump discontinuities at $\xi=0$, and showed that there is unstable point spectrum for $b<1$ for all $s<3/2$. We surmise that the peakons of both the $b$-family and the Novikov equation are linearly and spectrally unstable on such spaces.  

An interesting problem would be to study the stability of the peakons of the Novikov equations under transverse perturbations in a two-dimensional version of the Novikov equation. This is done in the case of smooth soliton solutions of the Camassa-Holm equation in \cite{GLP}. A long term project concerns the peakons of both the $b$-family and their orbital stability for values of $b$ other than 2 or 3. In general, the $b$-family is not integrable and the study of orbital stability is more challenging  for general values of $b$ than in the particular cases of the Camassa-Holm and the Degasperis-Procesi equations.  Furthermore, the Novikov equation was also generalized to a peakon $b$-family \cite{bne}. A possible project concerns the linear and nonlinear stability of the peakons in that case.

{\bf Acknowledgement.} 
The research of S. Lafortune was supported by a Collaboration Grants for Mathematicians from the Simons Foundation (award \# 420847). We thank Dan Maroncelli and D. Pelinovsky for helpful discussions.

\bibliographystyle{unsrt}

\end{document}